\tikzset{ext/.style={circle, draw,inner sep=1pt},int/.style={circle,draw,fill,inner sep=1pt},nil/.style={inner sep=1pt}}
\tikzset{exte/.style={circle, draw,inner sep=3pt},inte/.style={circle,draw,fill,inner sep=3pt}}
\tikzset{diagram/.style={matrix of math nodes, row sep=3em, column sep=2.5em, text height=1.5ex, text depth=0.25ex}}
\tikzset{diagram2/.style={matrix of math nodes, row sep=0.5em, column sep=0.5em, text height=1.5ex, text depth=0.25ex}}
\theoremstyle{plain}
\newtheorem{thm}{Theorem}[section]
\newtheorem{defn}[thm]{Definition}
\newtheorem{prop}[thm]{Proposition}
\newtheorem{lemma}[thm]{Lemma}
\theoremstyle{definition}
\newtheorem{ex}[thm]{Example}
\newtheorem{rem}[thm]{Remark}
\newcommand{\p}{\partial}
\newcommand{\R}{{\mathbb{R}}}
\newcommand{\op}{\mathcal}
\newcommand{\bpm}{\begin{pmatrix}}
\newcommand{\epm}{\end{pmatrix}}
\DeclareMathOperator{\Emb}{Emb}
\DeclareMathOperator{\Imm}{Imm}
\DeclareMathOperator{\Embbar}{\overline{Emb}}
\DeclareMathOperator{\Diff}{Diff}
\newcommand{\lD}{\mathcal{D}}
\newcommand{\Aut}{\mathrm{Aut}}
\DeclareMathOperator{\TopCat}{Top}
\newcommand{\beq}[1]{\begin{equation}\label{#1}}
\newcommand{\eeq}{\end{equation}}
\newcommand{\Operad}{\mathrm{Operad}}
\newcommand{\Bimod}{\mathrm{Bimod}}
\newcommand{\St}{\mathrm{V}}
\newcommand{\hofiber}{\mathrm{hofiber}}
\newcommand{\fr}{{\mathrm{fr}}}
\newcommand{\SO}{\mathrm{SO}}
\newcommand{\TOP}{\mathrm{TOP}}
\newcommand{\OO}{\mathrm{O}}
\begin{document}
\title{On the delooping of (framed) embedding spaces}

\author{Julien Ducoulombier}
\address{Max Planck Institute for Mathematics \\ Vivatsgasse 7 \\
R\"amistrasse 101 \\
53111 Bonn, Germany}
\email{julien@mpim-bonn.mpg.de}

\author{Victor Turchin}
\address{Department of Mathematics\\
Kansas State University\\
138 Cardwell Hall\\
Manhattan, KS 66506, USA}
\email{turchin@ksu.edu}

\author{Thomas Willwacher}
\address{Department of Mathematics \\ ETH Zurich \\
R\"amistrasse 101 \\
8092 Zurich, Switzerland}
\email{thomas.willwacher@math.ethz.ch}

\date{}

\thanks{
The authors acknowledge the University of Lille for hospitality. V.T.  has benefited from a visiting position of the Labex CEMPI (ANR-11-LABX-0007-01) at the Universit\'e de Lille and from a visiting position at the Max Planck Institute for Mathematics, Bonn for the achievement of this work. He was also partially supported by the Simons Foundation  grant ID:519474.
T.W. and J.D. have been partially supported by the NCCR SwissMAP funded by the Swiss National Science Foundation and the ERC starting grant GRAPHCPX}


\begin{abstract}
It is known  that the bimodule derived mapping spaces between two operads have a delooping in terms of the operadic mapping space. We show a relative version of that statement.
The result has applications to the spaces of disc embeddings fixed near the boundary and framed disc embeddings.
\end{abstract}

\maketitle

\vspace{-15pt}

\section{Introduction}

Let $\op P$ and $\op Q$ be  topological operads satisfying some mild conditions to be detailed below.
Suppose furthermore that we have a map of (pointed) spaces from some space $X$ to the operadic mapping space
\[
X\to \Operad(\op P, \op Q).
\]
Then one may in particular form the $\op P$-bimodule $\op Q{\circ} X$ defined such that $(\op Q{\circ} X)(n)=\op Q(n)\times X^{\times n}$, for which one uses the basepoint to define the left $\op P$-action and the map from $X$ to the mapping space to define the right $\op P$-action (see Section~\ref{s2}).
Our main result Theorem~\ref{th:main} is then that the following is a homotopy fiber sequence
\beq{equ:main}
\Bimod^h_{\op P}(\op P, \op Q{\circ} X) \to X \to \Operad^h(\op P, \op Q),
\eeq
where the superscript $h$ is used to show that we consider the derived version of the corresponding mapping spaces.
This result can be considered as a generalisation of the delooping result \cite{DwyerHess0,Duc} 
\beq{eq:ducoul}
\Bimod^h_{\op P}(\op P, \op Q) \simeq \Omega\Operad^h(\op P, \op Q),
\eeq
which can be recovered by setting $X=*$ to be a point. It is shown by the first author in~\cite{Duc} that~\eqref{eq:ducoul} is an equivalence of 
algebras over the little segments operad $\lD_1$. In Section \ref{Final2} we improve this result and  build an explicit weak equivalence of $\mathcal{SC}_{1}$-algebras between the pairs
$$
\left(
\begin{array}{c}\vspace{5pt}
\Omega\Operad^h(\op P, \op Q)) \\ 
hofiber(X\to  \Operad^h(\op P, \op Q))
\end{array} 
\right) \longrightarrow 
\left(
\begin{array}{c}\vspace{5pt}
\Bimod^h_{\op P}(\op P, \op Q) \\ 
\Bimod^h_{\op P}(\op P, \op Q{\circ} X)
\end{array} 
\right)
$$
where $\mathcal{SC}_{1}$ is the one dimensional Swiss-Cheese operad.

 We propose three applications of the above results. \vspace{10pt}

\noindent \textbf{Application 1: The space of disc embeddings.}
Consider the space $\Emb_\p(D^m, D^n)$ of disc embeddings  fixed in a neighbourhood of the boundary to be the standard equatorial inclusion $S^{m-1}\subset S^{n-1}$.
Assume furthermore $n-m\geq 3$ throughout. Let also $\lD_k$ denote the little $k$-discs operad.
Then the embedding space has two known deloopings, which we shall briefly describe.
First, one considers the homotopy fiber over immersions
\[
\Embbar_\p(D^m, D^n) =\hofiber\bigl( \Emb_\p(D^m, D^n) \to \Imm_\p(D^m, D^n)\simeq \Omega^m\St_{m,n}\bigr),
\] 
where $\St_{m,n}$ is the Stiefel manifold. It has been shown in \cite{WBdB2} that $\Embbar_\p(D^m, D^n)\cong \Omega^{m+1}\Operad^h(\lD_m,\lD_n)$, and that furthermore 
\beq{eq:BW1}
\Emb_\p(D^m,D^n) \simeq \Omega^m \hofiber(\St_{m,n}\to \Operad^h(\lD_m,\lD_n)).
\eeq
A second delooping is obtained in \cite{DucT}, where it is shown that 
\beq{eq:DucT1}
\Emb_\p(D^m,D^n) \simeq \Omega^m \Bimod^h_{\lD_m}(\lD_m,\lD_n^{m-\fr}), 
\eeq
where $\lD_n^{m-\fr}$ is the bimodule of $m$-framed little $n$-disks, which one should think of as embeddings of $m$-dimensional disks in the unit $n$-disk.

Our result \eqref{equ:main} above with $X=\St_{m,n}$ then shows that both deloopings agree:
\beq{eq:cor1}
\hofiber(\St_{m,n}\to \Operad^h(\lD_m,\lD_n)) \simeq \Bimod^h_{\lD_m}(\lD_m,\lD_n^{m-\fr}).
\eeq \vspace{-1pt}

\noindent \textbf{Application 2: The space of framed disc embeddings.}
Next consider the case that the operad $\op Q$ is acted upon by a topological group $G$.
Assuming that we have some map $f:\op P\to \op Q$ we hence obtain a map
\[
G \to \Operad(\op P,\op Q)
\] 
by composing $f$ and the $G$-action. 
The result \eqref{equ:main} in this case yields the first items of the fiber sequence
\[
\Bimod_{\op P}^h(\op P,\op Q{\circ} G) \to G \to \Operad^h(\op P, \op Q) \to \Operad^h(\op P, \op Q) \sslash G.
\]
Since in this case the fiber sequence may be extended as shown we obtain the delooping
\beq{equ:bimoddeloop}
\Bimod_{\op P}^h(\op P,\op Q{\circ} G) \simeq \Omega(\Operad^h(\op P, \op Q) \sslash G).
\eeq

Note that in this case the $\op P$-bimodule $\op Q{\circ} G$ is in fact an operad. However, the 
equivalence~\eqref{eq:ducoul} holds provided $\op Q(1)\simeq *$ and therefore it might  not be
true if the operad $\op Q$ is replaced by $\op Q{\circ} G$. In fact in case
$\op P(0)=\op Q(0)=*\simeq \op P(1)\simeq \op Q(1)$, which we will be assuming throughout the paper, and assuming that $G$ is connected and $\nsimeq *$, one has that the derived mapping spaces of bimodules 
\begin{equation}\label{eq:fr_nfr}
\Bimod_{\op P}^h(\op P,\op Q{\circ} G)\nsimeq\Bimod_{\op P}^h(\op P,\op Q)
\end{equation}
are not  weakly equivalent. By contrast, one has
\begin{equation}\label{eq:fr_nfr2}
\Operad^h(\op P, \op Q{\circ}G)\simeq \Operad^h(\op P,\op Q),
\end{equation}
see Remark~\ref{r:framed_bim_oper}

We apply the above findings to the spaces of framed disc embeddings $\Emb_\p^\fr(D^m, D^n)$.
It is shown in \cite{DucT}  that 
\beq{eq:DucT2}
\Emb_\p^\fr(D^m, D^n) \simeq \Omega^m \Bimod^h_{\lD_m}(\lD_m, \lD_n^{\fr}),
\eeq
where $\lD_n^{\fr}$ denotes the operad of positively framed little $n$-discs. Applying \eqref{equ:bimoddeloop} we hence obtain the $(m+1)$-st delooping
\beq{eq:cor2}
\Emb_\p^\fr(D^m, D^n) \simeq \Omega^{m+1}(\Operad^h(\lD_m, \lD_n) \sslash \SO(n)).
\eeq 

One should mention that the  delooping of the space of framed knots~\eqref{eq:cor2} that we discovered, can  be alternatively proved from the Boavida-Weiss result~\eqref{eq:BW1}, as explained in the last section (see Remark~\ref{r:sakai2}).

\bigskip

\noindent \textbf{Application 3: The Goodwillie-Weiss calculus and the smoothing theory.}
The deloopings~\eqref{eq:BW1}, \eqref{eq:DucT1}, \eqref{eq:DucT2} were obtained in~\cite{WBdB2,DucT} using the Goodwillie-Weiss
functor calculus on manifolds. In fact one obtains there the deloopings of the  Taylor towers
$T_k\Emb_\p(D^m,D^n)$ and $T_k\Emb_\p^\fr(D^m,D^n)$, $1\leq k\leq \infty$,  (without any codimension restriction on $m$ and $n$) by taking the derived mapping spaces of $k$-truncated operads and bimodules. Similarly~\eqref{eq:ducoul} and our main result~\eqref{equ:main} also have a truncated version:
\beq{equ:main2}
\Bimod^h_{\op P;\leq k}(\op P_{\leq k}, \op (Q{\circ} X)_{\leq k}) \to X \to \Operad^h_{\leq k}(\op P_{\leq k}, \op Q_{\leq k})
\eeq
is a homotopy fiber sequence for any $k\geq 1$, see Theorem~\ref{th:main}.

The obtained delooping result is of a particular interest when $m=n$:
\beq{eq:cor3}
T_\infty\Emb_\p(D^n, D^n) \simeq \Omega^{n+1}(\Aut^h(\lD_n) \sslash \SO(n)),
\eeq 
which should be compared to the Morlet-Burghelea-Lashof delooping of the group of 
relative to the boundary disc diffeomorphisms~\cite{BuL}:
\[
\Diff_\p(D^n)\simeq\Omega^{n+1}(\TOP(n)/\OO(n)),\,\, n\neq 4.
\]

In~\cite{Sakai} based on the Burghelea-Lashof work, K.~Sakai produces a similar delooping of the space
$\Emb_\p(D^m,D^n)$. In  the last Section~\ref{s:last} we show how this smoothing theory delooping 
can be adjusted to get a delooping of the space $\Emb^\fr_\p(D^m,D^n)$ of framed embeddings.
It is a very intriguing question whether the smoothing theory deloopings of spaces of long embeddings agree with the operadic ones
arising from the Goodwillie-Weiss calculus in the case when both deloopings are available (i.e., for $n-m\geq 3$ and $n\geq 5$).

\bigskip

For other related results on the little discs action on the spaces of disc embeddings and results on their deloopings,
we refer the reader to~\cite{BatDL,Budney1,Budney2,Duc2,Mostovoy,MoriyaSakai,Salvatore1,Turchin5}.


\section{The Reedy model categories of reduced operads and bimodules}\label{s2}

In this section, we cover the notion of a (truncated) operad and a (truncated)  bimodule over an operad. We equip these two categories with model category structures, called Reedy model category structures, using  left adjoints of the forgetful functors to the model category of $\Lambda$-sequences. For a more detailed account about the category of $\Lambda$-sequences and the Reedy model category of reduced operads, we refer the reader to \cite{Fr}. A precise study of the Reedy model category of reduced bimodules can be found in \cite{DucT,DucTF}.

\subsection{The model category of $\Lambda$-sequences} Let $\Lambda$ be the category whose objects are finite sets $[n]:=\{1,\ldots, n\}$, with $n\geq 1$, and morphisms are injective maps between them. By a $\Lambda$-sequence, we understand a functor $Y:\Lambda^{op}\rightarrow \TopCat$. By convention, we denote by $Y(n)$ the space $Y([n])$. In practice, a $\Lambda$-sequence $Y$ is a family of spaces $Y(1),\, Y(2),\ldots$ together with operations of the form
$$
u^{\ast}:Y(n)\rightarrow Y(m),\hspace{20pt}\text{ for any } u\in \Lambda([m]\,;\,[n]).
$$
A $\Lambda$-sequence $Y$ is said to be pointed if the space $Y(1)$ is equipped with a basepoint. 

Following \cite{Fr}, the categories $\Lambda Seq$ and $\Lambda Seq^{\ast}$ of $\Lambda$-sequences and pointed $\Lambda$-sequences, respectively, are endowed with model category structures in which a natural transformation $f:Y\rightarrow Z$ is a weak equivalence if it is an objectwise weak homotopy equivalence. Furthermore, a natural transformation $f$ is a fibration if the maps $Y(n)\rightarrow \mathcal{M}(Y)(n)\times_{\mathcal{M}(Z)(n)}Z(n)$, with $n\geq 1$, are Serre fibrations. The space $\mathcal{M}(Y)(n)$, called matching object of $Y$, is given by the formula 
\begin{equation}\label{B3}
\mathcal{M}(Y)(n):=\underset{\substack{\Lambda_{+}([i]\,;\,[n])\\ i< n}}{lim}\,\, Y(i),
\end{equation}
where $\Lambda_{+}$ is the subcategory of $\Lambda$ consisting of order-preserving maps. Similarly, for $k\geq 1$, the category of $k$-truncated $\Lambda$-sequences $\Lambda Seq_{\leq k}$ (resp. $k$-truncated  pointed $\Lambda$-sequences $\Lambda Seq^{\ast}_{\leq k}$), whose objects are functors $Y\in \Lambda Seq$ (resp.  $Y\in \Lambda Seq^{\ast}$) having $Y(n)=\emptyset$ for all $n>k$, inherits a model category structure. 

\medskip

\begin{defn}\label{d:circle}
Given a topological space $X$, define the $\Lambda$-sequence $X^{\times\bullet}$ assigning to $[n]$
the space $X^{\times n}$ of maps $[n]\to X$. The $\Lambda$-action is defined by precomposition: for any $u\in \Lambda([m]\,,\,[n])$, 
\[
u^\ast\colon (x_1,\ldots,x_n)\longmapsto (x_{u(1)},\ldots,x_{u(m)}).
\]
For a $\Lambda$-sequence $Y$,  define a $\Lambda$-sequence 
$Y{\circ}X$ as an objectwise product of $Y$ and $X^{\times\bullet}$.
\end{defn}

\begin{lemma}\label{l:circle}
In case $Y$ is a Reedy fibrant $\Lambda$-sequence, $X$ is any space, the $\Lambda$-sequence 
$Y{\circ}X$ is also Reedy fibrant.
\end{lemma}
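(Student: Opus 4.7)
The plan is to reduce Reedy fibrancy of $Y{\circ}X$ to Reedy fibrancy of $Y$ (given by hypothesis) and of the auxiliary $\Lambda$-sequence $X^{\times\bullet}$ (to be checked directly). The key observation is that, since $\mathcal{M}(-)(n)$ is defined as a limit (see \eqref{B3}), it commutes with cartesian products, so the objectwise description $(Y{\circ}X)(i)=Y(i)\times X^i$ yields a natural identification
\[
\mathcal{M}(Y{\circ}X)(n)\;\cong\;\mathcal{M}(Y)(n)\,\times\,\mathcal{M}(X^{\times\bullet})(n),
\]
under which the Reedy matching map of $Y{\circ}X$ literally becomes the cartesian product of the Reedy matching maps of $Y$ and of $X^{\times\bullet}$.

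Next I would verify directly that $X^{\times\bullet}$ is itself Reedy fibrant for arbitrary $X$. For $n=1$ the indexing category in \eqref{B3} is empty, hence $\mathcal{M}(X^{\times\bullet})(1)=\ast$ and the matching map $X\to\ast$ is trivially a Serre fibration. For $n\geq 2$ I would show that the canonical map
\[
X^n=(X^{\times\bullet})(n)\longrightarrow \mathcal{M}(X^{\times\bullet})(n)
\]
is a homeomorphism: a compatible family $(y_u)_u$ indexed by order-preserving injections $u\colon[i]\hookrightarrow[n]$ with $i<n$ is uniquely determined by its values on the $n$ arity-one inclusions $\sigma_j\colon[1]\hookrightarrow[n]$, since compatibility with the coordinate projections $\delta_k^{\ast}\colon X^i\to X$ forces $(y_u)_k=y_{\sigma_{u(k)}}$. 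Thus the matching map of $X^{\times\bullet}$ is an identity for $n\geq 2$, hence trivially a Serre fibration.

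Combining the two ingredients, the matching map for $Y{\circ}X$ is a product of two Serre fibrations and therefore itself a Serre fibration, proving Reedy fibrancy. The only step that requires actual thought is the combinatorial identification $\mathcal{M}(X^{\times\bullet})(n)\cong X^n$ for $n\geq 2$; everything else is formal. As a bonus, this argument directly justifies the parenthetical footnote: for $n\geq 2$ the additional factor contributes the identity map and so the fibers of the matching maps for $Y$ and $Y{\circ}X$ coincide, while at $n=1$ the fiber picks up an extra copy of $X$ coming from the fibration $X\to\ast$.
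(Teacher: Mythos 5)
Your proposal is correct and takes essentially the same approach as the paper's proof: show $X^{\times\bullet}$ is Reedy fibrant by computing $\mathcal{M}(X^{\times\bullet})(n)$ to be $\ast$ for $n=1$ and $X^n$ for $n\geq 2$, then conclude by observing that an objectwise product of Reedy fibrant $\Lambda$-sequences is Reedy fibrant. The only difference is that you spell out the combinatorial identification and the product-of-fibrants fact (via matching objects commuting with products), which the paper states without detail.
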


\begin{proof}
It is easy to see that $X^{\times\bullet}$ is a Reedy fibrant $\Lambda$-sequence. Indeed, we have that
\[
\mathcal{M}(X^{\times\bullet})(n)=
\begin{cases}
*,&n=1\\
X^{\times n},&\text{otherwise}
\end{cases}.
\]
Thus $X^{\times n}\to  \mathcal{M}(X^{\times\bullet})(n)$ is always a Serre fibration. On
the other hand, the objectwise product of two Reedy fibrant $\Lambda$-sequences is so as well.
\end{proof}

\begin{defn}
By a $\Sigma${\rm -cofibrant} object, we understand a $\Lambda$-sequence $X$ such that each space $X(n)$, with $n\geq 1$, is cofibrant in the model category $\TopCat_{\Sigma_{k}}$ of spaces equipped with an action of the symmetric group $\Sigma_{k}$. Fibrations and weak equivalences for this model structure are objectwise Serre fibrations and objectwise weak equivalences. 
\end{defn}

\subsection{The Reedy model category of reduced operads} A reduced operad $\op O$ is a pointed $\Lambda$-sequence $\op O: \Lambda^{op}\rightarrow \TopCat$ together with operations, called \textit{operadic compositions}, of the form
\begin{equation}\label{B4}
{\circ}_{i}:\op O(n)\times \op O(m)\longrightarrow \op O(n+m-1),\hspace{15pt} \text{with } 1\leq i\leq n,
\end{equation}
satisfying compatibility with the $\Lambda$-structure on $\op O$, associativity and unit axioms \cite[Part II Section 8.2]{Fr}. A map between reduced operads should respect the operadic compositions. We denote by $\Operad$ the category of reduced operads. Note that $\Operad$ is equivalent to the full subcategory of topological operads having a point as
an arity-zero component. In what follows, we use the notation
$$
x{\circ}_{i}y={\circ}_{i}(x,y),\hspace{20pt}\text{for all } x\in O(n) \text{ and } y\in O(m).
$$

Given an integer $k\geq 1$, we also consider the category of $k$-truncated reduced operads denoted by $\Operad_{\leq k}$. The objects are $k$-truncated pointed $\Lambda$-sequences together with operations of the form (\ref{B4}) with $n+m\leq k+1$. Furthermore, one has the following functor called the $k$-truncation functor:
$$
\begin{array}{rcl}\vspace{3pt}
(-)_{\leq k}:\Operad & \longrightarrow & \Operad_{\leq k} \\ 
 \op O & \longmapsto & \op O_{\leq k}:=\left\{ 
 \begin{array}{ll}\vspace{3pt}
 \op O_{\leq k}(n)=\op O(n) & \text{if } n\leq k, \\ 
 \op O_{\leq k}(n)=\emptyset & \text{if } n > k.
 \end{array} 
 \right.
\end{array} 
$$

For $k\geq 1$, the categories $\Operad$ and $\Operad_{\leq k}$ are endowed with the so called Reedy model category structures transferred from $\Lambda Seq^{\ast}$ and $\Lambda Seq^{\ast}_{\leq k}$, respectively, along the adjunctions 
$$
\begin{array}{rcl}\vspace{3pt}
\mathcal{F}_{Op}: \Lambda Seq^{\ast} & \leftrightarrows & \Operad:\mathcal{U}, \\ 
\mathcal{F}_{Op\,;\,\leq k}: \Lambda Seq^{\ast}_{\leq k} & \leftrightarrows & \Operad_{\leq k}:\mathcal{U},
\end{array} 
$$ 
where $\mathcal{U}$ is the forgetful functor while $\mathcal{F}_{Op}$ and $\mathcal{F}_{Op\,;\,\leq k}$ are the free operadic functors. In other words, a map $f:\op P \rightarrow \op Q$ of (possibly truncated) operads is a weak equivalence (resp. a fibration) if the corresponding map $\mathcal{U}(f)$ is a weak equivalence (resp. a fibration) in the model category of (possibly truncated) pointed $\Lambda$-sequences.  

\begin{ex}\textit{The framed operad $\op O{\circ} G$}

\noindent Let $G$ be a topological group and $\op O$ be a reduced operad for which each space $\op O(n)$ admits an action of $G$ compatible with the $\Lambda$ structure and the operadic compositions. Then, the $\Lambda$-sequence $\op O{\circ} G$,
see Definition~\ref{d:circle}, inherits an operadic structure from the operad $\op O$ and the group structure of $G$. 
The operadic compositions are given by the following formula:
$$
\begin{array}{rlcl}
{\circ}_{i}:&\op O{\circ} G(n)\times \op O{\circ} G(m) & \longmapsto & \op O{\circ} G(n+m-1); \\ 
&(\theta;g_{1},\ldots,g_{n}) \,;\, (\theta';g'_{1},\ldots,g'_{n})& \longmapsto & (\theta {\circ}_{i} (g_{i}\cdot \theta'); g_{1},\ldots,g_{i-1},g_{i}g'_{1},\ldots,g_{i}g'_{m},g_{i+1},\ldots,g_{n}).
\end{array} 
$$
\end{ex}

\begin{ex}\textit{The little discs operads $\lD_{m}$}

\noindent In arity $n$, the space $\lD_{m}(n)$ is the configuration space of $n$ discs of dimension $m$, labelled by $[n]$, inside the unit disc of dimension $m$ having disjoint interiors. The unit in arity $1$ is given by the identity map. The $\Lambda$-structure is obtained by removing some discs and permuting the other ones. Finally, the operadic composition ${\circ}_{i}$ substitutes the $i$-th disc of the first configuration by the second configuration as illustrated in Figure \ref{B5}. In particular, each space $\lD_{m}(n)$ admits an action of $\SO(m)$ and we denote by $\lD_{m}^{\fr}$ the corresponding framed operad. \vspace{-10pt}
\begin{figure}[!h]
\begin{center}
\includegraphics[scale=0.2]{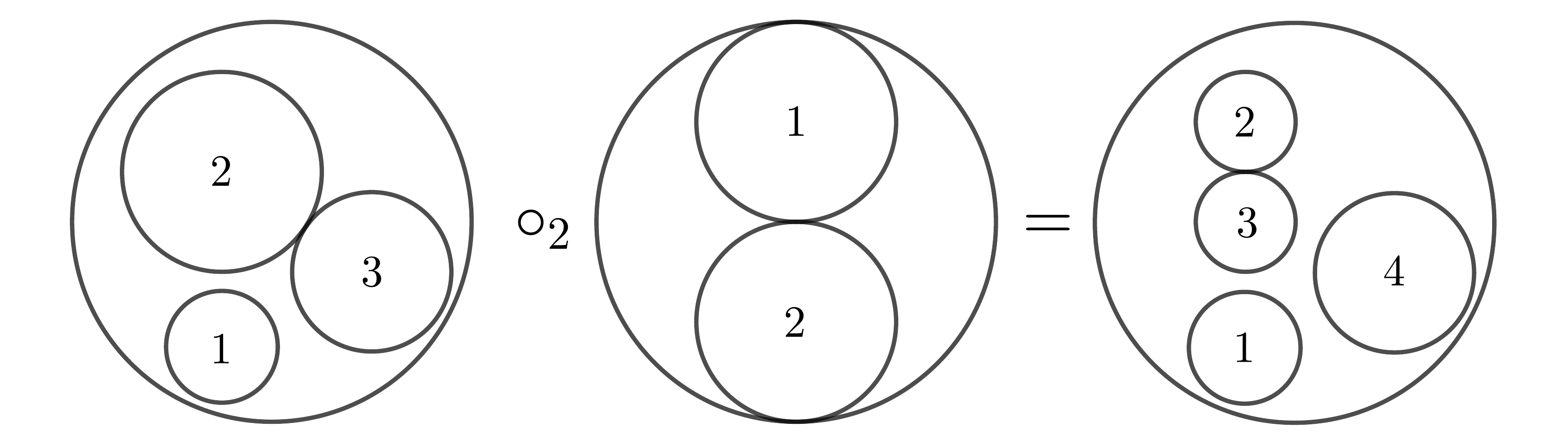}\vspace{-10pt}
\caption{Illustration of the operadic composition ${\circ}_{2}:\lD_{2}(3)\times \lD_{2}(2)\rightarrow \lD_{2}(4)$.}\label{B5}\vspace{-10pt}
\end{center}
\end{figure}
\end{ex}

\subsection{The Reedy model category of reduced bimodules over a reduced operad} \label{Final1}

Let $\op O$ be a reduced operad. A reduced bimodule $\op M$ over the operad $\op O$, or $\op O$-bimodule, is a $\Lambda$-sequence $\op M:\Lambda^{op}\rightarrow \TopCat$ together with operations of the form
\begin{equation}\label{B6}
\begin{array}{ll}\vspace{3pt}
{\circ}^{i}:\op M(n)\times \op O(m) \longrightarrow \op M(n+m-1), & \text{called right operations with } 1\leq i\leq n,\,\, m\geq 1 \\ 
\gamma:\op O(n)\times \op M(m_{1})\times \cdots \times M(m_{n})\longrightarrow \op M(m_{1}+\cdots + m_{n}) & \text{called left operation,}\,\, n\geq 1,
\text{ each $m_i\geq  1$},
\end{array} 
\end{equation}
satisfying some compatibility relations with the $\Lambda$-structure, associativity and unit axioms \cite{DucT}.  A map between $\op O$-bimodules should respect these operations. We denote by $\Bimod_{\op O}$ the category of reduced bimodules over the reduced operad $\op O$. In what follows, we use the notation
$$
\begin{array}{cl}
x\circ^{i}y=\circ^{i}(x,y) &   \text{for } x\in \op M(n) \text{ and } y\in \op O(m), \\ 
x(y_{1},\ldots,y_{n})=\gamma(x,y_{1},\ldots,y_{n}) & \text{for } x\in \op O(n) \text{ and } y_{i}\in \op M(m_{i}).
\end{array} 
$$

Given an integer $k\geq 1$, we also consider the category of $k$-truncated reduced bimodules over $\op O$ denoted by $\Bimod_{O\,;\,\leq k}$. The objects are $k$-truncated $\Lambda$-sequences together with operations of the form (\ref{B6}) with $n+m-1\leq k$ for the right operations and $m_{1}+\cdots + m_{n}\leq k$ for the left operation. Furthermore, one has the functor 
$$
\begin{array}{rcl}\vspace{3pt}
(-)_{\leq k}:\Bimod_{\op O} & \longrightarrow & \Bimod_{\op O\,;\,\leq k}; \\ 
 \op M & \longmapsto & \op M_{\leq k}:=\left\{ 
 \begin{array}{ll}\vspace{3pt}
 \op M_{\leq k}(n)=\op M(n) & \text{if } n\leq k, \\ 
 \op M_{\leq k}(n)=\emptyset & \text{if } n > k.
 \end{array} 
 \right.
\end{array} 
$$

For $k\geq 1$, the categories $\Bimod_{O}$ and $\Bimod_{O\,;\,\leq k}$ of reduced bimodules and $k$-truncated reduced bimodules over a reduced operad $O$, respectively, are also endowed with Reedy model category structures transferred from $\Lambda Seq$ and $\Lambda Seq_{\leq k}$, respectively, along the adjunctions 
$$
\begin{array}{rcl}\vspace{3pt}
\mathcal{F}_{B}: \Lambda Seq & \leftrightarrows & \Bimod_{O}:\mathcal{U}, \\ 
\mathcal{F}_{B\,;\,\leq k}: \Lambda Seq_{\leq k} & \leftrightarrows & \Bimod_{O\,;\,\leq k}:\mathcal{U},
\end{array} 
$$ 
where $\mathcal{U}$ is the forgetful functor while $\mathcal{F}_{B}$ and $\mathcal{F}_{B\,;\,\leq k}$ are the free bimodule functors. In other words, a map $f:\op P \rightarrow \op Q$ of (possibly truncated) $O$-bimodules is a weak equivalence (resp. a fibration) if the corresponding map $\mathcal{U}(f)$ is a weak equivalence (resp. a fibration) in the model category of (possibly truncated) $\Lambda$-sequences.

\begin{ex}
Let $\eta:\op P\rightarrow \op Q$ be a  map of operads. In that case, the map $\eta$ is also  a bimodule map over $\op P$ and the right operations of the bimodule structure on $\op Q$ are given by 
$$
\begin{array}{lrll}\vspace{3pt}
{\circ}^{i}: & \op Q(n)\times \op P(m)  & \longrightarrow &  \op Q(n+m-1); \\\vspace{7pt}
& (x\,;\,y) &\longmapsto & x{\circ}_{i}\eta(y),
\end{array}
$$
while the left operation is defined as follows:
$$
\begin{array}{lrll}\vspace{3pt}
\gamma: &  \op P(n)\times \op Q(m_{1})\times \cdots\times \op Q(m_{n})  & \longrightarrow &  \op Q(m_{1}+\cdots +m_{n});\\
& (x\,;\,y_{1},\ldots,y_{n})&\longmapsto & (\cdots(\eta(x){\circ}_{n}y_{n})\cdots){\circ}_{1}y_{1}.
\end{array}
$$
\end{ex}

\begin{ex}\label{B7}\textit{The fiber bundle bimodule $\op Q{\circ} X$.}

\noindent Let $f:\op P \rightarrow \op Q$ be a map of reduced operads and $(X\,;\,\ast)$ be a pointed space equipped with a map $\delta:X\rightarrow \Operad(\op P, \op Q)$ sending the basepoint to $f$. By convention, we denote by $\delta_{x}:\op P \rightarrow \op Q$ the operadic map associated to $x\in X$. We can think of $X$ as a space of $P$-bimodule structures on $\op Q$ by twisting the right module structure. Then, the $\Lambda$-sequence  $\op Q{\circ} X$, see Definition~\ref{d:circle}, 
 is a $\op P$-bimodule. 
The left operation is obtained using the operadic map $f$
$$
\begin{array}{rcl}\vspace{3pt}
\gamma:\op P(n)\times \op Q{\circ} X(m_{1})\times \cdots \times \op Q{\circ} X(m_{n}) & \longmapsto & \op Q{\circ} X(m_{1}+\cdots + m_{n});\\
p\,;\,\{(q_{i},x_{1}^{i},\ldots,x_{m_{i}}^{i})\} & \longmapsto & (\delta_{\ast}(p)(q_{1},\ldots,q_{n}),x_{1}^{1},\ldots,x_{m_{n}}^{n}),
\end{array}
$$
while the right operations are given by 
$$
\begin{array}{rcl}\vspace{3pt}
{\circ}^{i}: \op Q{\circ} X(n)\times \op P(m) & \longrightarrow & \op Q{\circ} X(n+m-1); \\ \vspace{5pt}
(q,x_{1},\ldots,x_{n})\,;\,p & \longmapsto & (q{\circ}_{i}\delta_{x_{i}}(p),x_{1},\ldots,\underset{m \text{ times}}{\underbrace{x_{i},\ldots,x_{i}}},\ldots, x_{n}).
\end{array}
$$

In the sequel we  consider a more general situation -- when the map $\delta$ sends $X$ to the
derived operadic mapping space:
\beq{eq:mapX}
\delta\colon X\to \Operad^h(\op P,\op Q).
\eeq
Assuming $Q$ is Reedy fibrant (if necessary by taking its fibrant replacement $\op Q\to \op Q^f$) and $\op P$ 
is  $\Sigma$-cofibrant, the
target of $\delta$ is the operadic mapping space $\Operad^h(\op P,\op Q)=\Operad(W\op P,Q)$,
where $W\op P$ is the Boardman-Vogt resolution of $\op P$ reviewed in the next section. Thus $\op Q
{\circ} X$ is given the structure of a $W\op P$-bimodule. Let us mention, however, that the restriction-induction adjunction
\beq{eq:restr_ind}
\mathrm{Ind}\colon \Bimod_{W\op P}\rightleftarrows\Bimod_{\op P}\colon \mathrm{Restr}
\eeq
is a Quillen equivalence~\cite[Theorem 3.7]{DucTF}, and therefore it does not matter which of the two homotopy categories of bimodules we consider.
\end{ex}
\vspace{1pt}

{\bf Characterisation of cofibrations.} Let $\op P$ be a reduced $\Sigma$-cofibrant operad. Denote by $\Sigma_{>0}\Bimod_{\op P}$ the category 
of \lq\lq{}usual\rq\rq{} bimodules $M=\{M(n),\, n\geq 1\}$ over $\op P$ which  means: its objects $M$ have   a componentwise
symmetric group action and a $\op P$-action~\eqref{B6}, but they are not given  the $\Lambda$ structure. This category is endowed with the {\it projective} model structure for which
weak equivalences and fibrations are those componentwise~\cite{DucTF}. By \cite[Theorem~3.5]{DucTF}, a morphism $M\to N$ in the
Reedy model category $\Bimod_{\op P}$ of reduced bimodules is a cofibration if and only if it is one in the projective model category $\Sigma_{>0}\Bimod_{\op P}$.

\section{Delooping derived mapping spaces of bimodules}

\subsection{The Boardman-Vogt resolution in the category of operads}\label{sec:ducmap}
Let $\op P$ be a reduced operad. We denote its Boardman-Vogt resolution by $W\op P$. Its points are equivalence classes $[T\,;\,\{t_{e}\}\,;\,\{a_{v}\}]$ where $T$ is a rooted tree, $\{a_{v}\}_{v\in V(T)}$ is a family of points in $\op P$ labelling the vertices of $T$ and $\{t_{e}\}_{e\in E^{int}(T)}$ is a family of real numbers in the interval $[0\,,\,1]$ indexing the inner edges. In other words, one has 
$$
W\op P(n):=
\left.
\underset{T\in \,\text{\textbf{tree}}_{n}}{\displaystyle\coprod} \,\,\underset{v\in V(T)}{\displaystyle\prod}\,\op P(|v|) \,\,\times \,\,\underset{e\in\, E^{int}(T)}{\displaystyle\prod}\, [0\,,\,1]\,\,
\right/\!\sim\,\,\hspace{20pt} \text{with } n\geq 1,
$$
where $\text{\textbf{tree}}_{n}$ is the set of planar rooted trees without univalent vertices and with $n$ leaves labelled by the set $\{1,\ldots,n\}$. In other words, such a decoration of the leaves can be considered as an element in permutation group $\Sigma_{n}$ thanks to the planar order. The equivalence relation is generated by the unit axiom (i.e. we remove vertices labelled by the unit of the operad $\op P$) and the compatibility with the symmetric group axiom (a vertex $v$ labelled by $a\cdot\sigma$, with $\sigma\in \Sigma_{|v|}$, is identified with $a$ by permuting the incoming edges of $v$ according to $\sigma$). Furthermore, if an inner edge is indexed by $0$, then we contract it by using the operadic structure of $\op P$.
\begin{figure}[!h]
\begin{center}
\includegraphics[scale=0.35]{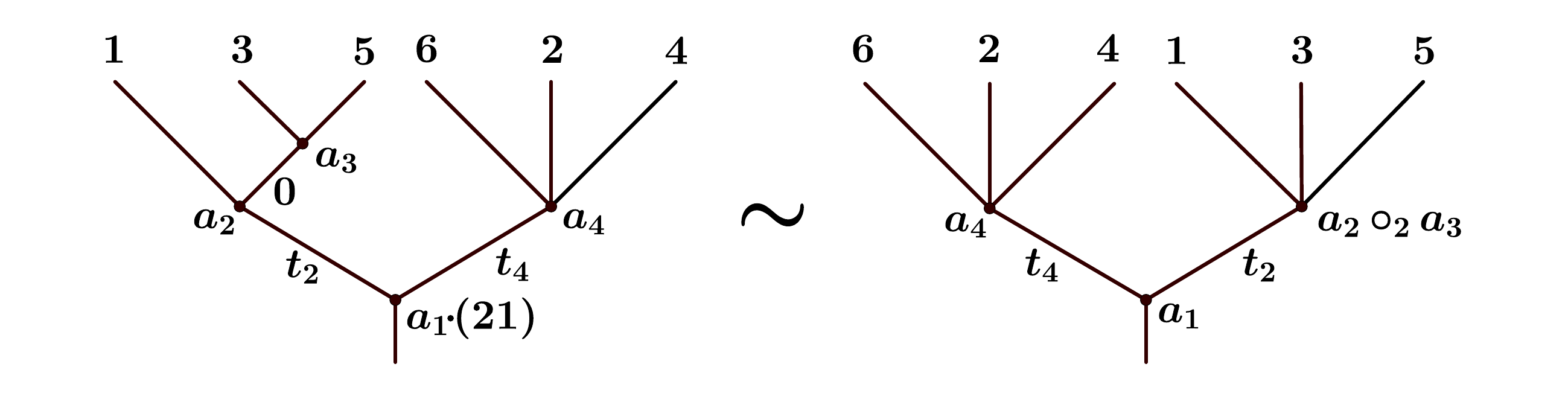}\vspace{-5pt}
\caption{Illustration of the equivalence relation.}
\end{center}
\end{figure}

Let $[T\,;\,\{t_{e}\}\,;\,\{a_{v}\}]$ be a point in $W\op P(n)$ and $[T'\,;\,\{t'_{e}\}\,;\,\{a'_{v}\}]$ be a point in $W\op P(m)$. The operadic composition $[T\,;\,\{t_{e}\}\,;\,\{a_{v}\}]{\circ}_{i}[T'\,;\,\{t'_{e}\}\,;\,\{a'_{v}\}]$ is obtained by grafting $T'$ to the $i$-th incoming input of $T$ and indexing the new inner edge by $1$. The $\Lambda$-structure  is defined by permuting the leaves and contracting some of them using the $\Lambda$-structure of the operad $\op P$. Furthermore, there is a map of operads sending the real numbers indexing the inner edges to $0$
\begin{equation}\label{d8}
\mu:W\op P\rightarrow \op P\,\,;\,\, [T\,;\,\{t_{e}\}\,;\,\{a_{v}\}] \mapsto [T\,;\,\{0_{e}\}\,;\,\{a_{v}\}].
\end{equation}

\begin{thm}\label{I0}{\cite[Theorem 5.1]{BM}, \cite[Theorem II.8.4.12]{Fr}}
Assume that $\op P$ is a  $\Sigma$-cofibrant operad. The objects $W\op P$ and $(W\op P)_{\leq k}$ are cofibrant replacements of $\op P$ and $\op P_{\leq k}$ in the categories $\Operad$ and $\Operad_{\leq k}$, respectively. In particular, the map (\ref{d8}) is a weak equivalence. 
\end{thm}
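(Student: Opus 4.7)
The plan is to verify two separate claims which together yield the theorem: first, that $W\op P$ is cofibrant in $\Operad$ (and likewise $(W\op P)_{\leq k}$ in $\Operad_{\leq k}$), and second, that the map $\mu$ of \eqref{d8} is a weak equivalence of the underlying $\Lambda$-sequences. I will prove each separately and note that the truncated case follows by literally the same arguments restricted to trees with at most $k$ leaves.

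For the weak equivalence, I would exhibit an explicit deformation retraction. Define a section $s\colon \op P\to W\op P$ sending $a\in\op P(n)$ to the class of the $n$-corolla with its single vertex labeled by $a$ and no internal edges; then by construction $\mu\circ s=\mathrm{id}_{\op P}$. In the other direction, define a homotopy
\[
H\colon W\op P\times [0,1]\longrightarrow W\op P,\qquad H\bigl([T;\{t_e\};\{a_v\}],\lambda\bigr) = [T;\{\lambda\, t_e\};\{a_v\}].
\]
The map $H$ is well defined because setting $\lambda=0$ forces every internal edge to carry parameter $0$, which is then contracted using the operadic composition of $\op P$, so that $H(-,0)=s\circ\mu$, while $H(-,1)=\mathrm{id}_{W\op P}$. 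Hence $\mu$ is an arity-wise homotopy equivalence of topological spaces, which is enough to conclude that it is a weak equivalence of (pointed) $\Lambda$-sequences, and in particular of operads.

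For cofibrancy, I would filter $W\op P$ by the total number of internal edges,
\[
W^{(0)}\op P \subset W^{(1)}\op P \subset W^{(2)}\op P \subset \cdots\subset W\op P,
\]
so that $W^{(0)}\op P$ is the free operad generated by the pointed $\Lambda$-sequence $\op P$, and each inclusion $W^{(n-1)}\op P\hookrightarrow W^{(n)}\op P$ is realized as a pushout in $\Operad$ along a free-operad functor applied to a map built from the pushout-products of the boundary inclusions $\partial [0,1]\hookrightarrow [0,1]$ with the decorations $\prod_{v\in V(T)} \op P(|v|)$, symmetrized over tree automorphisms. Here the hypotheses on $\op P$ enter essentially: $\Sigma$-cofibrancy ensures that the equivariant pushout-products are cofibrations in $\Top_{\Sigma_n}$, while well-pointedness ensures that the unit relation (which identifies a vertex labeled by the basepoint of $\op P(1)$ with an inert edge) glues along a cofibration rather than an arbitrary inclusion. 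Thus each attaching map is a generating cofibration of $\Operad$ transferred from $\Lambda Seq^{\ast}$, so each step of the filtration is a cofibration and so is the transfinite composite.

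The main obstacle in this scheme is the bookkeeping for the filtration step: one must carefully quotient by the combined action of tree automorphisms, the $\Sigma_n$-action on leaves, and the unit relation that collapses edges indexed by $0$, and show that the resulting attaching objects are still $\Sigma$-cofibrant. This is the content of Berger--Moerdijk's and Fresse's detailed analysis and is where both hypotheses on $\op P$ are used simultaneously. Once that is done, the truncated statement is automatic, because restricting the filtration to trees with at most $k$ leaves gives exactly the analogous cellular decomposition of $(W\op P)_{\leq k}$, and the homotopy $H$ above restricts.
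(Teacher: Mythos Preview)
The paper does not give its own proof of this statement: it simply records the result with citations to \cite[Theorem 5.1]{BM} and \cite[Theorem II.8.4.12]{Fr}. Your sketch is essentially the argument found in those references. The deformation retraction $H([T;\{t_e\};\{a_v\}],\lambda)=[T;\{\lambda t_e\};\{a_v\}]$ is exactly the standard one used to show $\mu$ is a weak equivalence, and the cellular filtration argument for cofibrancy is the Berger--Moerdijk strategy.

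One small caution on your filtration: filtering by the raw number of internal edges interacts awkwardly with the unit relation, since removing a unit-labelled vertex can change the edge count. In the cited sources the filtration is organised slightly differently (e.g.\ by the number of vertices, or by a word-length/degree function adapted to the free operad presentation) precisely so that the unit and edge-contraction relations are absorbed into the attaching data at each stage. You acknowledge this bookkeeping as the ``main obstacle,'' and that is accurate; the well-pointedness and $\Sigma$-cofibrancy hypotheses are exactly what make those attaching maps cofibrations after passing to the relevant equivariant quotients. With that caveat, your outline matches the cited proofs.
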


From now on, we introduce a filtration of the resolution $W\op P$ according to the arity. A point in $W\op P$ is said to be \textit{prime} if the real numbers indexing the set of inner edges are strictly smaller than $1$. Besides, a point is said to be \textit{composite} if one of its inner edges is indexed by $1$ and such a point can be decomposed into prime components. More precisely, the prime components of a point indexed by a tree are obtained by cutting the edges labelled by $1$. \vspace{7pt}

A prime point is in the $k$-th filtration term $W\op P_{k}$ if it has  at most $k$ leaves. Then, a composite point is in the $k$-th filtration term if its prime components are in $W\op P_{k}$. For instance, the composite point in Figure \ref{G2} is an element in the filtration term $W\op P_{4}$. By convention, $W\op P_{0}$ is the initial object in the category of operads. For each $k\geq 0$, $W\op P_{k}$ is a reduced operad and the family $\{W\op P_{k}\}$ produces the following filtration of $W\op P$: 
\begin{equation*}\label{G4}
\xymatrix{
W\op P_{0}\ar[r] & W\op P_{1} \ar[r] &  \cdots \ar[r] & W\op P_{k-1} \ar[r] & W\op P_{k} \ar[r] & \cdots \ar[r] & W\op P.
}\vspace{3pt}
\end{equation*}  

\begin{figure}[!h]
\begin{center}
\includegraphics[scale=0.35]{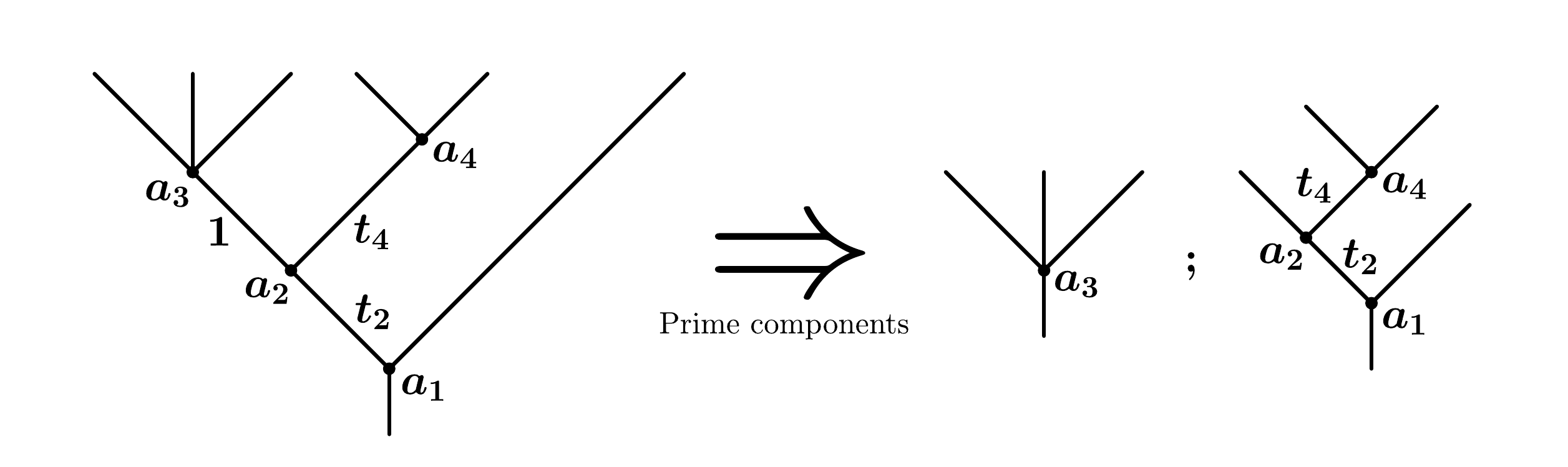}\vspace{-10pt}
\caption{Illustration of a composite point together with its prime components.}\label{G2}
\end{center}
\end{figure}

From a $k$-truncated reduced operad $\op P_{k}$, we consider the $k$-free operad $\mathcal{F}_{Op_{k}}(\op P_{k})$ whose $k$ first components coincide with $\op P_{k}$. The functor $\mathcal{F}_{Op_{k}}$ is left adjoint to the truncation functor $(-)_{\leq k}$ and it can be expressed as a quotient of the free operad functor in which the equivalence relation is generated by the following axiom: any composite 
element is identified with the composition of its prime components.
   In our case, we can easily check that $\mathcal{F}_{Op_{k}}((W \op P)_{\leq k})=W\op P_{k}$, since $W\op P_{k}$ is the sub-operad of $W\op P$ generated by its $k$ first components. Consequently, from this adjunction and Theorem~\ref{I0}, we deduce the following identifications:
\begin{equation}\label{eq:tr_op_map}
\Operad_{\leq k}((W \op P)_{\leq k}\,,\,Q_{\leq k}) \cong  \Operad(W\op P_{k}\,,\,Q).
\end{equation}

\subsection{A cofibrant resolution of $\op P$ in the category of bimodules over itself}

The operad $\op P$ may naturally be considered as a reduced bimodule of itself. We will use (a slight variant of) the cofibrant resolution $B\op P$ of $\op P$ as a bimodule introduced by Ducoulombier in \cite{Duc}. The points are equivalence classes $[T\,;\,\{t_{v}\}\,;\,\{x_{v}\}]$ where $T$ is a tree, $\{t_{v}\}$ is a family of real numbers in the interval $[0\,,\,1]$ indexing the vertices and $\{x_{v}\}$ is a family of points in $W\op P$ labelling the vertices. Furthermore, if $e$ is an inner edge of $T$, then the real numbers $t_{s(e)}$ and $t_{t(e)}$ indexing respectively the source and the target vertices of $e$ according to the orientation toward the root satisfy the relation $t_{s(e)}\geq t_{t(e)}$:
\begin{equation}\label{B9}
B\op P (n)\subset 
\left.
\underset{T\in \textbf{tree}_{n}}{\displaystyle\coprod}\,\,\,\underset{v\in V(T)}{\displaystyle\prod}\, W\op P(|v|)\times [0\,,\,1]\,
\right/ \sim,\hspace{20pt}\text{with } n\geq 1.
\end{equation}
The equivalence relation is generated by the unit (i.e. we can remove the vertices indexing by the unit $\ast_{1}$ the operad $W\op P$) and the compatibility with the symmetric group axioms. Furthermore, if two vertices joined by an edge have the same height, then the edge may be contracted, using the operadic composition in $W\op P$ as illustrated in Figure~\ref{Fig4}.

\begin{figure}[!h]
\begin{center}
\includegraphics[scale=0.12]{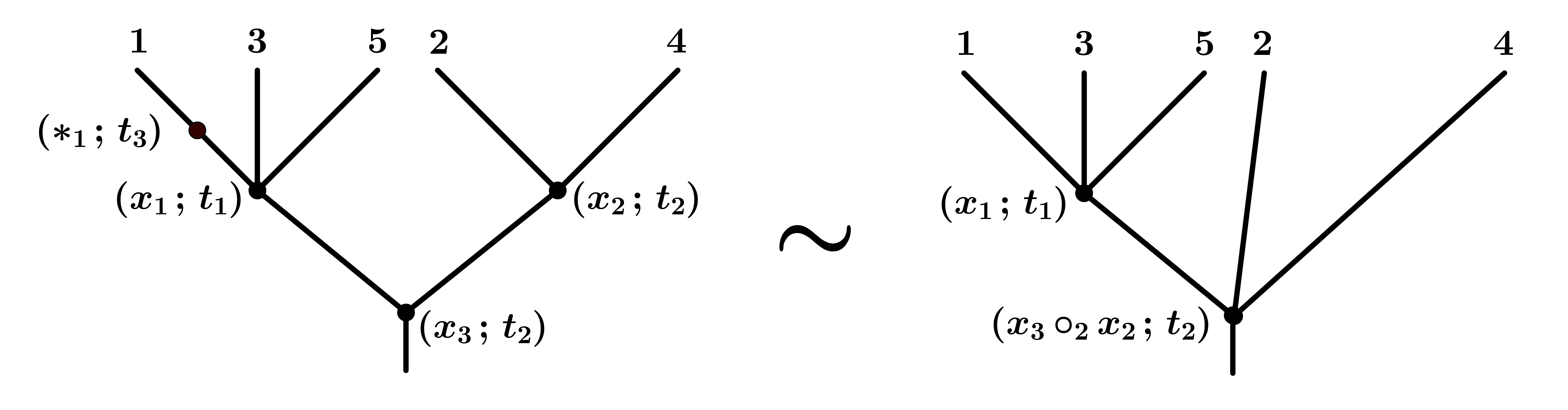}
\caption{Illustration of the equivalence relation on $B\op P(5)$.}\label{Fig4}
\end{center}
\end{figure}

The object so obtained inherits a bimodule structure over $W\op P$. The left and right module structures along a point in $W\op P(m)$, with $m\geq 1$, are both obtained  by grafting trees, with the newly formed vertices being assigned height 0 for the left module structure and height 1 for the right module structure. Moreover, the $\Lambda$-structure is defined by permuting some leaves and contracting the other ones using the $\Lambda$ structure of $W\op P$. Furthermore, there is a map of bimodules sending the real numbers indexing the vertices to $0$:
\begin{equation}\label{B1}
\mu':B\op P\rightarrow W\op P\,\,;\,\, [T\,;\,\{t_{v}\}\,;\,\{x_{v}\}] \mapsto \mu([T\,;\,\{0_{v}\}\,;\,\{x_{v}\}]).\vspace{-17pt}
\end{equation}
\begin{figure}[!h]
\begin{center}
\includegraphics[scale=0.35]{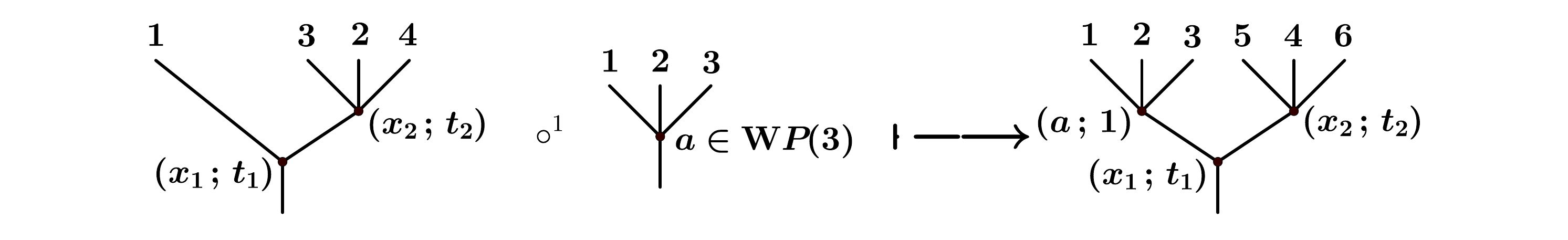}
\caption{Illustration of the right operation ${\circ}^{1}:B\op P(4)\times W\op P(3)\rightarrow B\op P(6)$.}\label{B2}
\end{center}
\end{figure}

\begin{thm}{\cite[Theorem 2.6]{Duc},\cite[Proposition 3.9]{DucTF}}\label{D8}
Assume that $\op P$ is a  $\Sigma$-cofibrant operad. Then, the objects $B\op P$ and $(B\op P)_{\leq k}$ are cofibrant replacements of $\op P$ and $\op P_{\leq k}$ in the categories $\Bimod_{W\op P}$ and $\Bimod_{W\op P\,;\,\leq k}$, respectively. In particular, the map (\ref{B1}) is a weak equivalence.
\end{thm}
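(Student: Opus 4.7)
The statement has two parts: (i) the bimodule map $\mu'$ is a weak equivalence, and (ii) $B\op P$ and each truncation $(B\op P)_{\leq k}$ are cofibrant in the Reedy model structure on $\Bimod_{W\op P}$ (resp.\ $\Bimod_{W\op P\,;\,\leq k}$). My plan is to handle (i) via an explicit deformation retract at the level of underlying $\Lambda$-sequences, and (ii) via an arity filtration presenting $B\op P$ as an iterated cell attachment of free bimodules $\mathcal{F}_B(-)$ applied to cofibrations of $\Lambda$-sequences.

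For (i), since both fibrations and weak equivalences in $\Bimod_{W\op P}$ are detected on the underlying $\Lambda$-sequences, it suffices to exhibit a strong deformation retract of $B\op P$ onto $W\op P$. Define an embedding $\iota\colon W\op P \hookrightarrow B\op P$ arity-wise by $x \mapsto [C_n\,;\,\{t_v=0\}\,;\,\{x_v=x\}]$, where $C_n$ denotes the $n$-leaf corolla; then $\mu'\circ\iota=\mathrm{id}_{W\op P}$. The linear height-contraction
\[
H_s\bigl([T\,;\,\{t_v\}\,;\,\{x_v\}]\bigr) := [T\,;\,\{(1-s)t_v\}\,;\,\{x_v\}], \quad s\in[0,1],
\]
is a $\Lambda$-equivariant homotopy from $\mathrm{id}_{B\op P}$ to $\iota\circ\mu'$; it is well defined and continuous because the equivalence relation in \eqref{B9} identifies an edge whose endpoints have equal height with its operadic contraction in $W\op P$, so that common rescaling of all heights introduces no discontinuity.

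For (ii), I would filter $B\op P$ by arity, setting $B\op P^{(k)}\subset B\op P$ to be the smallest sub-$W\op P$-bimodule containing all points of arity $\leq k$, so that $\colim_k B\op P^{(k)} = B\op P$. The inductive step consists of exhibiting each inclusion $B\op P^{(k-1)}\hookrightarrow B\op P^{(k)}$ as a pushout
\[
\begin{array}{ccc}
\mathcal{F}_B(\partial A_k) & \longrightarrow & B\op P^{(k-1)} \\
\downarrow & & \downarrow \\
\mathcal{F}_B(A_k) & \longrightarrow & B\op P^{(k)}
\end{array}
\]
where $A_k$ is a $\Lambda$-sequence concentrated in arity $k$ describing the \emph{interior} cells: decorated trees whose root carries strictly positive height and no vertex of which has height $1$, presented in a canonical form (for example with $W\op P$-labels in prime reduction); and $\partial A_k\hookrightarrow A_k$ collects the boundary degenerations in which a vertex height reaches $0$ or $1$, or a $W\op P$-label becomes composite. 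The $\Sigma$-cofibrancy and well-pointedness of $\op P$ translate to $\partial A_k\hookrightarrow A_k$ being a cofibration in $\Lambda Seq$, and $\mathcal{F}_B$ then delivers a cofibration of bimodules.

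The main obstacle lies in making this decomposition canonical: one must show that an arbitrary point of $B\op P^{(k)}$ either lies in $B\op P^{(k-1)}$ or arises in a unique way from $\mathcal{F}_B(A_k)$ by applying left and right $W\op P$-actions to a unique interior cell. This requires a canonical cutting procedure along the loci $\{t_v=0\}$ and $\{t_v=1\}$ and across composite $W\op P$-labels, and must be checked to be $\Lambda$-equivariant and compatible with the equivalence relation on $B\op P$ recalled in \eqref{B9}. Once this canonical form is installed, the pushout identification is formal, and the truncated assertion about $(B\op P)_{\leq k}$ follows by running the same filtration argument inside the category $\Bimod_{W\op P\,;\,\leq k}$.
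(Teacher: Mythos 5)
The paper does not prove Theorem~\ref{D8}; it is cited from \cite{Duc} and \cite{DucTF}. So I can only evaluate your argument on its own merits and against the standard route in the cited literature.

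For part (i), your deformation retract is correct and is the standard argument. The only thing to tidy up is a notational ambiguity inherited from the paper: equation \eqref{B1} displays $\mu'$ with codomain $W\op P$ but then applies $\mu$, whose codomain is $\op P$. Your $H_s$ in fact exhibits the \emph{height-collapse} map $\nu\colon B\op P\to W\op P$ (set all $t_v=0$ and compose the labels in $W\op P$) as a deformation retraction, with $\iota(W\op P)$ fixed pointwise and $\Lambda$-equivariance holding because $H_s$ neither touches the leaves nor the $W\op P$-labels. You should then explicitly compose with the weak equivalence $\mu\colon W\op P\to \op P$ of Theorem~\ref{I0} to conclude that the augmentation $B\op P\to \op P$ is a weak equivalence; as you note, this is detected on underlying $\Lambda$-sequences, so it is irrelevant that $\iota$ is not a right $W\op P$-module map.

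For part (ii), the strategy (arity filtration, pushouts of free bimodules along cofibrations of $\Lambda$-sequences) is the right one, and your filtration $B\op P^{(k)}$ coincides with the paper's $B\op P_k$ from \eqref{C1}. But you do not carry it out, and you say so yourself. Beyond the acknowledged gap, two things in the sketch are off. First, your description of the interior cells $A_k$ is incorrect: a prime point of $B\op P$ is one in which \emph{every} vertex of the main tree has height strictly in $(0,1)$, not just the root-positivity and no-height-$1$ condition you state. Second, you conflate two different kinds of degenerations in $\partial A_k$: heights of the main tree hitting $0$ or $1$ genuinely produce composite points and thus land in $B\op P^{(k-1)}$, whereas a $W\op P$-label becoming composite (an edge parameter of the label in $W\op P$ hitting $1$) does \emph{not} change the number of leaves of the main tree, so it is an internal cell boundary within $A_k$ controlled by the $\Sigma$-cofibrancy of $W\op P$, not a degeneration into $B\op P^{(k-1)}$. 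Disentangling these two layers (heights on the main tree versus the internal cell structure of the $W\op P$-labels) is exactly the work that makes the pushout identification nontrivial, and is what the $\Sigma$-cofibrancy and well-pointedness hypotheses must feed into; an auxiliary filtration by the number of main-tree vertices (as in the proof of Lemma~\ref{lem:fib} in this paper) would be the natural way to organize it. As written, the cofibrancy claim is not yet proved.
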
\vspace{-5pt}
 
\begin{figure}[!h]
\begin{center}
\includegraphics[scale=0.5]{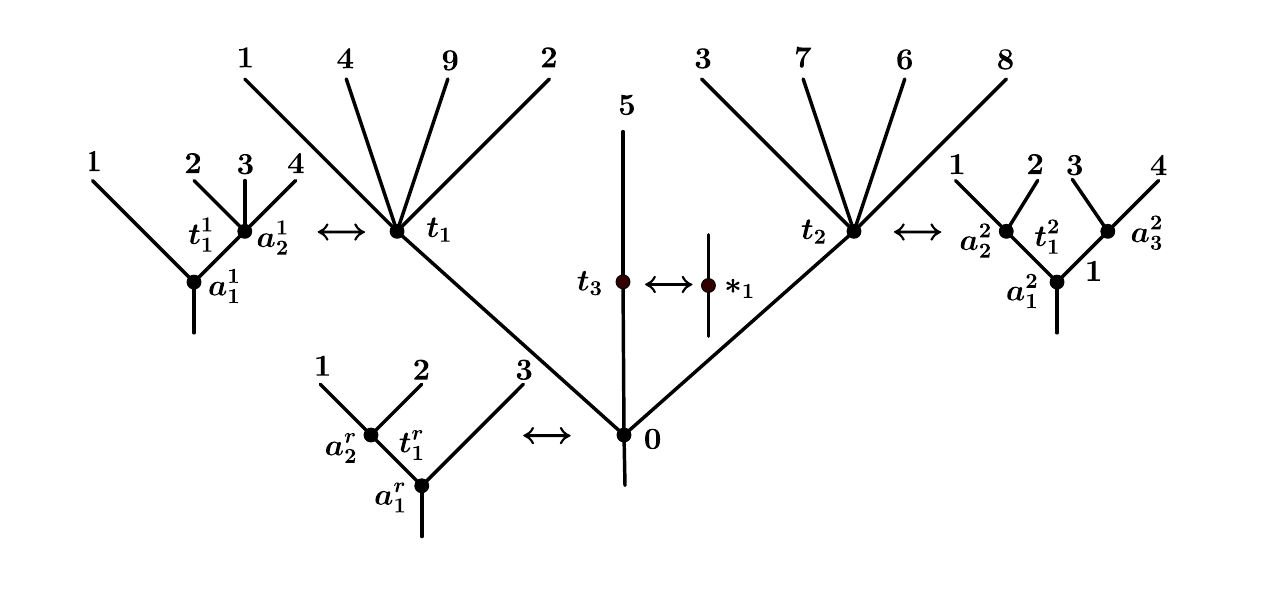}\vspace{-13pt}
\caption{An alternative representation of a point in $B\op P(9)$.}\label{C2}
\end{center}
\end{figure}

From now on, we introduce a filtration of the resolution $B\op P$ according to the arity. 
 Similarly to the operadic case 
a point in $B\op P$ is said to be \textit{prime} if the real numbers indexing the vertices of the main tree are in the interval $]0\,,\,1[$. Besides, a point is said to be \textit{composite} if one vertex of the main tree is indexed by $0$ or $1$ and such a point can be decomposed into prime components. More precisely, thanks to the unit axiom, we can always choose a representative point for which each path joining a leaf to the root passes through at least one vertex not indexed by $0$ or $1$ (by adding a bivalent vertex labelled by the unit if necessary). The prime components of such a point are obtained by removing the vertices of the main tree indexed by $0$ or $1$. For instance, the three prime components associated to the composite point in Figure \ref{C2} are the following ones: \vspace{15pt}

\hspace{-40pt}\includegraphics[scale=0.21]{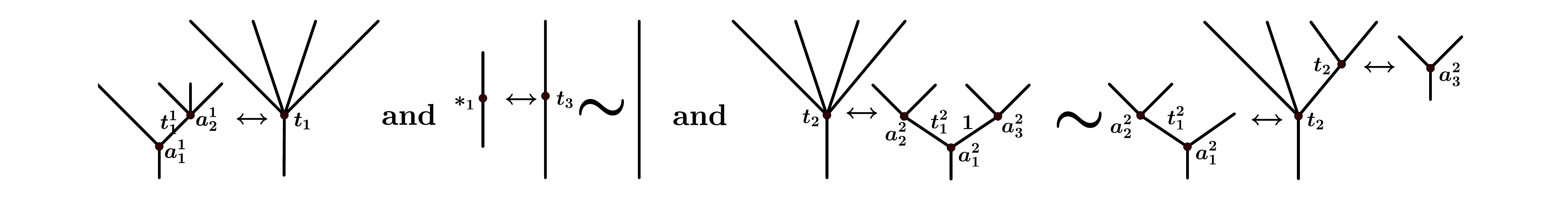}\vspace{15pt}

A prime point is in the $k$-th filtration term $B\op P_{k}$ if it has at most $k$ leaves. Similarly, a composite point is in the $k$-filtration if its prime components are in $B\op P_{k}$. For instance, the composite point in Figure \ref{C2} is an element in the filtration term $B\op P_{4}$. In particular, if the vertices of a point in $B\op P$ are indexed only by $0$ or $1$, then the point is in the first filtration term since its prime components are $1$-corollas indexed by the unit of the operad $W\op P$ (this element can also be considered as the vertexless tree ``$|$''). By convention, $B\op P_{0}$ is the initial element in the category of bimodules over $W\op P$ -- it is empty in all arities~$\geq 1$. The family $\{B\op P_{k}\}$ produces the following filtration of $B\op P$:\vspace{2pt}
\begin{equation}\label{C1}
\xymatrix{
B\op P_{0}\ar[r] & B\op P_{1} \ar[r] &  \cdots \ar[r] & B\op P_{k-1} \ar[r] & B\op P_{k} \ar[r] & \cdots \ar[r] & B\op P_\infty:= B\op P.
}
\end{equation}

It is shown by \cite[Theorem 2.6]{Duc} and \cite[Proposition 3.9]{DucTF} that each inclusion above is a Reedy cofibration
(in $\Bimod_{W\op P}$) and  in any arity $n$, the inclusion $P_{k-1}(n)\to P_k(n)$ is a $\Sigma_n$-cofibration.

Analogously to the operadic case, from a $k$-truncated bimodule $M_{k}$, we consider the $k$-free bimodule $\mathcal{F}_{B_{k}}(M_{k})$ whose $k$ first components coincide with $M_{k}$. Similarly to the operadic case, the functor $\mathcal{F}_{B_{k}}$ is left adjoint to the truncation functor $(-)_{\leq k}$ and can be expressed as a quotient of the free bimodule functor in which the equivalence relation is generated by the following axiom: any composite element in $M_k$ is equivalent to the corresponding product of its prime components.  
 One has, $\mathcal{F}_{B_k}(B\op P_{\leq k})=B\op P_k$. 
 Consequently, there are the following identifications:
\begin{equation}\label{eq:tr_bim_map}
\Bimod_{W\op P\,;\,\leq k}((B\op P)_{\leq k}\,;\,Q_{\leq k}) \cong \Bimod_{W\op P}(B\op P_{k}\,;\,Q).
\end{equation}

\subsection{The weak equivalence of $\lD_{1}$-algebras}
In the previous subsection we introduced a cofibrant replacement $B\op P$ of an operad $\op P$ in the category of bimodules over $W\op P$. In \cite{Duc}, the author uses this resolution in order to equip the corresponding model of the derived mapping space of bimodules with a structure of $\lD_{1}$-algebra. Then, he shows the following statement:

\begin{thm}{\cite[Theorem 3.1]{Duc}}\label{thm:duc}
Let $P$ be a  $\Sigma$-cofibrant operad and $\eta:P\rightarrow Q$ be a map of reduced operads. If the space $Q(1)$ is weakly contractible, then there are explicit weak equivalences of $\lD_{1}$-algebras:
\begin{equation}\label{Mapcase1}
\begin{array}{rcl}\vspace{5pt}
\xi: \Omega \Operad^{h}(P\,;\,Q) & \longrightarrow & \Bimod_{W\op P}^{h}(P\,;\,Q) , \\ 
\xi_{k}:\Omega \big( \Operad^{h}_{\leq k}(P_{\leq k}\,;\,Q_{\leq k})\big) & \longrightarrow & \Bimod_{W\op P\,;\,\leq k}^{h}(P_{\leq k}\,;\,Q_{\leq k}) .
\end{array} 
\end{equation}
\end{thm}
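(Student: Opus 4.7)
The plan is to construct $\xi$ explicitly from the cofibrant resolutions $W\op P$ and $B\op P$ reviewed above, and then prove it is a weak equivalence by an induction on the arity filtrations. Since $\op Q$ may be assumed Reedy fibrant, one has models $\Operad^{h}(\op P;\op Q)=\Operad(W\op P,\op Q)$ and $\Bimod^{h}_{W\op P}(\op P;\op Q)=\Bimod_{W\op P}(B\op P,\op Q)$. A point of $\Omega\Operad(W\op P,\op Q)$ is a path $\gamma\colon [0,1]\to\Operad(W\op P,\op Q)$ with $\gamma(0)=\gamma(1)=\tilde\eta$, where $\tilde\eta:=\eta\circ\mu\colon W\op P\to\op Q$. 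On a prime element $[T;\{t_{v}\};\{x_{v}\}]\in B\op P(n)$ (all heights $t_{v}\in(0,1)$) I would define
$$
\xi(\gamma)\bigl([T;\{t_{v}\};\{x_{v}\}]\bigr)
$$
to be the operadic composition in $\op Q$ along the tree $T$ of the family $\{\gamma(t_{v})(x_{v})\}_{v\in V(T)}$. On a composite point, cut the main tree at the vertices of height $0$ and $1$, apply the above formula on each prime component, and recompose using the left (respectively right) $W\op P$-action on $\op Q$ at the heights $0$ (respectively $1$).

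Next I would check that $\xi(\gamma)$ descends to the quotient defining $B\op P$ and is a morphism of $W\op P$-bimodules. The only nontrivial compatibility occurs at boundary heights: since $\gamma(0)=\gamma(1)=\eta\circ\mu$ factors through the operad projection $\mu$, the formula at a vertex of height $0$ or $1$ coincides with what one obtains from the left, respectively right, $W\op P$-action on $\op Q$, so the cutting/gluing procedure is consistent and the equivalence relation on $B\op P$ is respected. Continuity is immediate from the construction. To see that $\xi$ is a map of $\lD_{1}$-algebras, recall from \cite{Duc} that the $\lD_{1}$-action on $\Bimod_{W\op P}(B\op P,\op Q)$ is obtained by rescaling vertex-heights to $[0,1/2]$ and $[1/2,1]$ and stacking; this is precisely the transport of loop concatenation in $\Omega\Operad(W\op P,\op Q)$ through $\xi$, so $\xi(\gamma_{1}\ast\gamma_{2})$ is the $\lD_{1}$-product of $\xi(\gamma_{1})$ and $\xi(\gamma_{2})$.

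To show that $\xi$ is a weak equivalence I would use the arity filtrations $W\op P_{k}$ and $B\op P_{k}$ together with the resulting towers
$$
\Omega\Operad(W\op P,\op Q)=\holim_{k}\Omega\Operad(W\op P_{k},\op Q),\qquad \Bimod_{W\op P}(B\op P,\op Q)=\holim_{k}\Bimod_{W\op P}(B\op P_{k},\op Q),
$$
which are compatible with $\xi$ and simultaneously yield the truncated maps $\xi_{k}$. The base case $k=1$ is where the hypothesis $\op Q(1)\simeq\ast$ enters: both mapping spaces are assembled out of $\Map(-,\op Q(1))$ and are therefore weakly contractible. For the inductive step I would identify the homotopy fibers of the two restriction maps $\xi_{k}\to\xi_{k-1}$. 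On the operadic side this fiber is a twisted form of a mapping space from the relative cell $W\op P_{k}/W\op P_{k-1}$ into $\op Q(k)$, shifted by $\Omega$; on the bimodule side it is a mapping space from the relative cell $B\op P_{k}/B\op P_{k-1}$ into $\op Q(k)$, and the cells added in $B\op P_{k}$ over $B\op P_{k-1}$ carry exactly one new height coordinate in $(0,1)$ per prime vertex, which supplies the suspension parameter matching the loop on the other side.

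The main obstacle is making this fiber identification precise. Concretely one has to show that the attaching map of the arity-$k$ cells in $B\op P_{k}$ — governed by the prime components at boundary heights $0$ and $1$ and by the $\Lambda$-structure — matches, after suspension, the attaching map of the arity-$k$ cells of $W\op P_{k}$ in the operadic resolution. This combinatorial matching is the technical heart of the argument, and it is where the interplay between the interval coordinate in $B\op P$ and the loop coordinate in $\Omega\Operad(W\op P_{k},\op Q)$ must be organised by hand. The assumption $\op Q(1)\simeq\ast$ is used throughout to discard potentially problematic contributions from univalent vertices appearing in both the Boardman-Vogt construction and its bimodule analogue; without it the two cellular structures would fail to line up at each inductive step.
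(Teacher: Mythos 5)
The paper does not prove this theorem itself: it cites \cite{Duc} and only records the explicit construction of $\xi$ in \S3.3 for later use, so there is no in-paper proof of the weak-equivalence claim to compare against. Your construction of $\xi$ does match the paper's --- replacing each vertex decoration $(x_v,t_v)$ by $g_{|v|}(x_v,t_v)=\gamma(t_v)(x_v)\in\op Q(|v|)$ and composing along $T$ in $\op Q$. (Your prime/composite case split is unnecessary: the same formula applies uniformly, since $\gamma(0)=\gamma(1)=\eta\circ\mu$ automatically reproduces the left and right $W\op P$-actions on $\op Q$ at boundary heights.)

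For the weak-equivalence part, the tower strategy over the arity filtrations $W\op P_k$, $B\op P_k$ is indeed the right general plan, and the identification of the tower stages with the truncated mapping spaces is correct. But the step you flag as the technical heart --- identifying the fibers of the two restriction maps --- is exactly the step you do not carry out, so as written this is an outline, not a proof. Your cell description is also too coarse: the arity-$k$ prime elements of $B\op P$ are stratified by trees with arbitrarily many vertices, each vertex carrying a height constrained by the partial order of the tree, and the claim that this space relative to its boundary is a suspension of the corresponding operadic cell in $W\op P_k$ is a substantive lemma that needs an argument, not the observation that ``each prime vertex carries one height coordinate.'' Finally, your account of where $\op Q(1)\simeq\ast$ enters is off: unary vertices decorated by $W\op P(1)$ appear identically on both sides of the comparison and match up regardless of $\op Q$. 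The contractibility of $\op Q(1)$ is needed to make the arity-one layer of the two towers agree: a $W\op P$-bimodule map out of $B\op P_1$ may send the class of the unit anywhere in $\op Q(1)$, so the bimodule side carries a factor of (a homotopy-fixed-point version of) $\op Q(1)$, while an operad map $W\op P_1\to\op Q$ must send unit to unit and hence has no such factor; this discrepancy is killed exactly when $\op Q(1)$ is weakly contractible.
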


In what follows, we assume that the operad $\op Q$ is fibrant in the Reedy model category of reduced operads. If it is not the case, then we substitute $\op Q$ with any fibrant resolution $\op Q^{f}$. Such resolution is equipped with a map $\tilde{\eta}:\op P\rightarrow \op Q \rightarrow \op Q^{f}$ making $Q^{f}$ into a fibrant object in both categories of reduced operads and bimodules over $W\op P$. Similarly, for any $k\geq 1$, the $k$-truncated operad $\op Q^{f}_{\leq k}$ gives rise to a fibrant replacement of $\op Q_{\leq k}$ in the categories of $k$-truncated operads and $k$-truncated bimodules over $W\op P$.

By using the resolutions $W\op P$ and $W\op P_{k}$ for (truncated) operads as well as the resolutions $B\op P$ and $B\op P_{k}$ for (truncated) bimodules, we can easily define the maps $\xi$ and $\xi_{k}$. First of all, we recall that a point in the loop space $\Omega \Operad(W\op P\,;\,Q)$, based in $\eta{\circ}\mu:W\op P \rightarrow \op P\rightarrow Q$, is given by a family of maps\vspace{2pt}
$$
g_{n}:W\op P(n)\times [0\,,\,1]\longrightarrow Q(n),\hspace{15pt}\forall n\geq 1,\vspace{2pt}
$$
satisfying the following conditions:\vspace{5pt}
\begin{itemize}
\item[$\blacktriangleright$] $g_{n}(\iota(\ast_{1})\,;\, t)=\ast_{1}$\hspace{93pt} $\forall t\in [0\,,\,1]$,\vspace{5pt}
\item[$\blacktriangleright$] $g_{n}(x{\circ}_{i}y\,;\, t)=g_{l}(x\,;\, t){\circ}_{i}g_{n-l+1}(y\,;\, t)$\hspace{20pt} $\forall t\in [0\,,\,1]$, $x\in W\op P(l)$ and $y\in W\op P(n-l+1)$,\vspace{5pt}
\item[$\blacktriangleright$] $g_{n}(x\,;\, t)=\eta{\circ} \mu(x)$\hspace{90pt} $t\in \{0\,;\,1\}$ and $x\in W\op P(n)$,\vspace{5pt}
\item[$\blacktriangleright$] $g_{m}(u^{\ast}(x)\,;\, t)=u^{\ast}(g_{n}(x\,;\, t))$\hspace{50pt} $\forall t\in [0\,,\,1]$, $x\in W\op P(n)$ and $u\in \Lambda([m]\,,\,[n])$.
\end{itemize}\vspace{5pt}

Let $g=\{g_{n}\}$ be a point in the loop space and let $[T\,;\,\{t_{v}\}\,;\,\{x_{v}\}]$ be a point in $B\op P$.
 This element is a tree whose vertices are labelled by pairs $(x_{v},t_{v})$. To obtain $\xi(g)\bigl(
 [T\,;\,\{t_{v}\}\,;\,\{x_{v}\}]\bigr)$ we replace the label  of each  vertex $v$ of $T$ 
 by $g_{|v|}(x_{v},t_v)\in \op Q(|v|)$ and then we compose all these elements using the structure of $T$ and 
 the operadic compositions of $\op Q$.
  For instance, the image of the point $[T\,;\,\{t_{v}\}\,;\,\{x_{v}\}]\in B\op P(6)$ associated to the operadic composition in Figure \ref{B2} is the following one:\vspace{3pt}
$$
\begin{array}{cl}\vspace{5pt}
\xi(g)([T\,;\,\{t_{v}\}\,;\,\{x_{v}\}]) & = g_{2}(x_{1}\,;\,t_{1})\big( g_{3}(a\,;\, 1)\,;\,g_{3}(x_{2}\,;\,t_{2})\big),  \\ 
 & =g_{3}(x_{1}\,;\,t_{1})\big( \mu{\circ}\eta(a)\,;\,g_{3}(x_{2}\,;\,t_{2})\big).
\end{array} \vspace{3pt}
$$

\section{The homotopy fiber case}

For the rest of this section, $\eta:\op P \rightarrow \op Q$ is a map of reduced operads and $(X\,;\,\ast)$ is a pointed space equipped  with a map $\delta:X\rightarrow \Operad(W\op P, Q)$ sending the basepoint to the composite map $\eta{\circ} \mu:W\op P \rightarrow P \rightarrow Q$. According to the notation introduced in Example \ref{B7}, applied to the composite map $\eta{\circ}\mu$, one has a $W\op P$-bimodule $\op Q{\circ} X$. 
  The purpose of this section is to prove the following theorem:

\begin{thm}\label{th:main}
Suppose that $X$ is a path-connected pointed space; $\op P$ and $\op Q$ are reduced topological operads; $\op P$ is   $\Sigma$-cofibrant; $\op Q$ is Reedy fibrant; $\op P(1)$ and $\op Q(1)$ are weakly contractible. Then, the following are homotopy fiber sequences:
\begin{equation}\label{Mapcase2}
\begin{array}{c}\vspace{3pt}
\Bimod_{W\op P}(B\op P, \op Q{\circ} X) \longrightarrow X \longrightarrow \Operad(W\op P, \op Q), \\ 
\Bimod_{W\op P}(B\op P_{k}, \op Q{\circ} X) \longrightarrow X \longrightarrow \Operad(W\op P_{k}, \op Q).
\end{array} 
\end{equation}
\end{thm}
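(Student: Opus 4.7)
The plan is to realize $\op Q \circ X$ as the total space of a Reedy fibration of $W\op P$-bimodules and then apply derived mapping out of $B\op P$. The basic short exact sequence of $W\op P$-bimodules is
$$
\op Q \hookrightarrow \op Q \circ X \twoheadrightarrow X^{\times\bullet},
$$
where the inclusion sends $q\mapsto(q,*,\ldots,*)$ and the projection forgets the $\op Q$-coordinate. Here $X^{\times\bullet}$ carries the $W\op P$-bimodule structure coming from the trivial operad map $W\op P\to *$, so its left action concatenates $X$-coordinates and its right action duplicates them. The projection is a Reedy fibration: at level $n$ it is the trivial product projection $\op Q(n)\times X^n\to X^n$, and the matching-object condition $\op Q\circ X(n)\to \mathcal{M}(\op Q\circ X)(n)\times_{\mathcal{M}(X^{\times\bullet})(n)}X^{\times\bullet}(n)$ collapses to $\op Q(n)\to\mathcal{M}(\op Q)(n)$, a Serre fibration by Reedy fibrancy of $\op Q$. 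Since $B\op P$ is cofibrant in $\Bimod_{W\op P}$ by Theorem~\ref{D8}, applying $\Bimod_{W\op P}(B\op P,-)$ yields a Serre fibration
$$
\Bimod_{W\op P}(B\op P,\op Q)\to \Bimod_{W\op P}(B\op P,\op Q\circ X)\to \Bimod_{W\op P}(B\op P,X^{\times\bullet}),
$$
whose fiber is identified with $\Omega\,\Operad(W\op P,\op Q)$ by Theorem~\ref{thm:duc}.

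Next I would identify the base with $X$ itself. A bimodule map $\psi\colon B\op P\to X^{\times\bullet}$ is rigidified by $x_0:=\psi(e)\in X$: left-action compatibility forces $\psi(v\cdot(e,\ldots,e))=(x_0,\ldots,x_0)$, right-action compatibility forces $\psi(e\circ^1 v)=(x_0,\ldots,x_0)$, and $\Lambda$-naturality reduces the value of $\psi$ on an arbitrary $[v;t]\in B\op P(n)$ to projections of $\psi_1|_{B\op P(1)}$. Since the assumption $\op P(1)\simeq *$ makes $W\op P(1)$ and hence $B\op P(1)$ contractible, $\psi_1$ is homotopic to a constant, and evaluation at the unit yields a weak equivalence $\Bimod_{W\op P}(B\op P,X^{\times\bullet})\xrightarrow{\sim} X$.

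To identify the resulting fibration as the homotopy fiber of $\delta$, I would produce an explicit comparison map $\Psi\colon \Bimod_{W\op P}(B\op P,\op Q\circ X)\to \mathrm{hofib}(X\xrightarrow{\delta}\Operad(W\op P,\op Q))$. Given $\phi$, set $x_\phi:=\pi_X(\phi(e))$ and define the path $\gamma_\phi(t)(v):=\pi_{\op Q}(\phi([v;t]))$ using the height parameter of single-vertex trees in $B\op P$. The left-action formula gives $\gamma_\phi(0)=\eta\circ\mu$ (after absorbing the $\op Q(1)$-component of $\phi(e)$, which is homotopically trivial by $\op Q(1)\simeq *$), while the right-action formula gives $\gamma_\phi(1)=\delta_{x_\phi}$, so $(x_\phi,\gamma_\phi)$ lands in the homotopy fiber. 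Both $\Bimod_{W\op P}(B\op P,\op Q\circ X)$ and $\mathrm{hofib}(X\to \Operad)$ are Serre fibrations over $X$, and on the fiber over any $x$ the map $\Psi$ restricts to the equivalence supplied by Theorem~\ref{thm:duc} applied at the base point $\delta_x\in\Operad(W\op P,\op Q)$ rather than $\eta\circ\mu$. Path-connectedness of $X$ promotes this fiberwise equivalence to an equivalence of total spaces. The truncated statement follows by running the same argument with the filtrations $W\op P_k$ and $B\op P_k$ and the truncated form of Theorem~\ref{thm:duc}.

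The main obstacle is the verification that $\Psi$ is a fiberwise equivalence, since the fiber of $\Bimod_{W\op P}(B\op P,\op Q\circ X)\to X$ over $x\in X$ consists of bimodule maps into $\op Q\circ X$ whose $X$-coordinates are not a priori forced to be the constant tuple $(x,\ldots,x)$; one must leverage the fiber-bundle structure of $\op Q\circ X$ over $X^{\times\bullet}$, preserved by mapping out of the cofibrant $B\op P$, so that the fiber identifies with $\Bimod_{W\op P}(B\op P,\op Q_{\delta_x})$, where $\op Q_{\delta_x}$ denotes $\op Q$ with bimodule structure twisted by $\delta_x$. Once this identification is in place, the conclusion follows by applying Theorem~\ref{thm:duc} pointwise over $X$ and keeping careful track of how the basepoint of $\Operad(W\op P,\op Q)$ varies with $x$.
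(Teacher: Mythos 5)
Your setup is mostly sound and genuinely parallels the paper's: you exhibit $\Bimod_{W\op P}(B\op P,\op Q\circ X)$ as the total space of a Serre fibration over $X$ by hitting the Reedy fibration $\op Q\circ X\twoheadrightarrow X^{\times\bullet}$ with the cofibrant $B\op P$, identify the base $\Bimod_{W\op P}(B\op P, X^{\times\bullet})\simeq X$ using $\op P(1)\simeq *$ (this is essentially Lemma~\ref{lem:T1} in the paper), and recognise the fibre over the constant-at-$*$ map as $\Bimod_{W\op P}(B\op P,\op Q)\simeq\Omega\Operad(W\op P,\op Q)$ via Theorem~\ref{thm:duc}. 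This is close to the paper's route through the auxiliary space $\Bimod_{W\op P,X}(B\op P,\op Q)$, which is precisely the rigid model of your ``fibre-bundle structure'' remark. The step that fails, however, is the comparison map $\Psi$, and it fails for a concrete reason, not a cosmetic one.

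The formula $\gamma_\phi(t)(v):=\pi_{\op Q}\bigl(\phi([v;t])\bigr)$ does not define an operad map $W\op P\to\op Q$ for $t\in(0,1)$. The relation $[v_1\circ_i v_2;t]\sim[T;t,t;v_1,v_2]$ (contracting an edge of equal heights) gives no relation between $\phi([v_1\circ_i v_2;t])$ and $\phi([v_1;t])\circ_i\phi([v_2;t])$, because the bimodule structure of $B\op P$ only grafts at heights $0$ and $1$: a bimodule map $\phi:B\op P\to\op Q\circ X$ is free to take arbitrary values on one-vertex trees at intermediate heights. The same issue already appears at the endpoints: $\pi_{\op Q}(\phi([v;0]))=\delta_\ast(v)(q_0,\dots,q_0)$ and $\pi_{\op Q}(\phi([v;1]))=q_0\circ_1\delta_{x_0}(v)$ with $(q_0,x_0)=\phi(\text{trivial tree})$, and $q_0$ is not the unit; you flag this and suggest ``absorbing'' it using $\op Q(1)\simeq *$, but that is a homotopy, not an identity, so $\gamma_\phi$ does not land in the homotopy fibre as required. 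This is why the paper's comparison map $\psi'$ is built in the \emph{opposite} direction, $\hofiber\bigl(X\to\Operad(W\op P,\op Q)\bigr)\to\Bimod_{W\op P,X}(B\op P,\op Q)$: given a path $g$ of genuine operad maps, the assignment $[T;\{t_v\};\{x_v\}]\mapsto$ (tree-composition of $g_{|v|}(x_v,t_v)$) is well defined precisely because each $g_t$ respects operadic composition, and it commutes with the projections to $X$ by construction.

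A secondary point: verifying that $\Psi$ is a fibrewise equivalence ``at the basepoint $\delta_x$ for all $x$'' is both unnecessary and problematic. Unnecessary because, once you have a map between two Serre fibrations over the path-connected $X$ covering the identity of $X$, it suffices (by the five lemma on the long exact sequences) to check the single fibre over $\ast\in X$, where Theorem~\ref{thm:duc} applies verbatim — this is exactly how Theorem~\ref{ThmPart2} in the paper concludes. Problematic because the bimodule structure you obtain on $\op Q$ over the fibre at $x$ has the left action through $\delta_\ast$ and the right action through $\delta_x$ (the paper's $\op Q_x$), whereas Theorem~\ref{thm:duc} concerns the bimodule with both actions through the same map; and the fibre of $\hofiber(X\to\Operad(W\op P,\op Q))\to X$ over $x$ is a twisted path space from $\delta_\ast$ to $\delta_x$, not a loop space at $\delta_x$. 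Reversing the direction of the comparison map and doing the check over the basepoint only removes all of these difficulties; with that change your decomposition $\op Q\hookrightarrow\op Q\circ X\twoheadrightarrow X^{\times\bullet}$ gives a clean alternative to the paper's truncation-filtration argument in Theorem~\ref{THmPart1} and Lemma~\ref{lem:fib}.
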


\begin{proof}
We only prove the statement in the usual case. The same arguments work for the truncated case.
The result is a consequence of Theorems \ref{THmPart1} and \ref{ThmPart2} in which we introduce an intermediate space $\Bimod_{W\op P\,;\,X}(B\op P\,;\,\op Q)$ together with explicit weak equivalences
$$
\xymatrix{
\psi:hofiber\big( X\rightarrow \Operad(W\op P,\op Q) \big) \ar[r]^{\hspace{25pt}\psi'} & \Bimod_{W\op P\,;\,X}(B\op P\,;\,\op Q) \ar[r]^{\hspace{-10pt}\psi''} & \Bimod_{W\op P}(B\op P\,;\,\op Q{\circ} X).
}
$$ 

\end{proof}

For the rest of the section we will be assuming that $\op P$ and $\op Q$ are reduced topological operads;
$\op P$ is  $\Sigma$-cofibrant; $\op Q$ is Reedy fibrant.

\subsection{A bundle of bimodule maps}\label{sec:bimod_bundle}

For $x\in X$ we denote by $\op Q_x$ the $W\op P$-bimodule  obtained from $\op Q$ by using the map $\delta_{x}$ to define the right $W\op P$-action and the map $\delta_{\ast}$ to define the left $W\op P$-action. In other words, the $W\op P$-bimodule structure of $\op Q_x:=\{Q_{x}(n)=Q(n),\, n\geq 1\}$ is given by the following formulas: 
$$
\begin{array}{rcl}\vspace{3pt}
{\circ}^{i}: \op Q_{x}(n)\times W\op P(m) & \longrightarrow & \op Q_{x}(n+m-1); \\ \vspace{5pt}
q\,,\,p & \longmapsto & q{\circ}_{i}\delta_{x}(p),\\\vspace{2pt}
\gamma:W\op P(n)\times \op Q_{x}(m_{1})\times \cdots \times \op Q_{x}(m_{n}) & \longmapsto & \op Q_{x}(m_{1}+\cdots + m_{n});\\
p,q_{1},\ldots,q_{n} & \longmapsto & \delta_{\ast}(x)(q_{1},\ldots,q_{n}).
\end{array} 
$$

Next, for a reduced $W\op P$-bimodule $M$, define $\Bimod_{W\op P,X}(M,\op Q )$ to be the space consisting of pairs $(x, f)$, where $x\in X$ and $f\in \Bimod_{W\op P}(M,\op Q_x)$.
There is a natural inclusion 
\[
\psi'':\Bimod_{W\op P,X}(B \op P,\op Q ) \longrightarrow \Bimod_{W\op P}(B \op P,\op Q{\circ} X )
\]
such that the image of $(x,f)$ as above is the map
\[
B\op P \ni p\longmapsto (f(p),x,\dots, x).
\]

\begin{prop}\label{lem:fib}
The truncations 
\begin{equation}\label{truncMap1}
\Bimod_{W\op P}(B\op P_k,\op Q{\circ} X) \longrightarrow \Bimod_{W\op P}(B\op P_{\ell},\op Q{\circ} X)
\end{equation}
and
\begin{equation}\label{truncMap2}
\Bimod_{W\op P,X}(B \op P_k,\op Q ) \longrightarrow \Bimod_{W\op P,X}(B\op P_{\ell},\op Q)
\end{equation}
are Serre fibrations for any $0\leq \ell\leq k\leq\infty$.
\end{prop}

\begin{proof}
 It is enough to prove it for $\ell=k-1<\infty$. 
Let us assume first that $\op P(1)=*$. We introduce the subspace $\partial B \op P(k)$, consisting of points in $B \op P(k)$ that have at least one vertex of the main tree labelled by~$0$ or~$1$.
  Since we assume $\op P(1)=*$, one has $\partial B \op P(k)=B\op P_{k-1}(k)$.  The space $\partial B \op P(k)$ is equipped with an action of the symmetric group $\Sigma_{k}$ and the inclusion $\partial B\op P(k)\to B\op P(k)$ is a $\Sigma_k$-cofibration~\cite{Duc}.
%
 The inclusion of bimodules $B\op P_{k-1}\to B\op P_k$ is a Reedy cofibration because it fits in the following pushout diagram
 in $\Sigma_{>0}\Bimod_{W\op P}$, see {\bf Characterisation of cofibrations} in Subsection~\ref{Final1}: 
\begin{equation}\label{eq:sigma_pushout1}
\xymatrix{
\mathcal{F}_{W\op P}^\Sigma(\partial B\op P(k))\ar[r] \ar[d] & \mathcal{F}_{W\op P}^\Sigma(B\op P(k)) \ar[d] \\
B \op P_{k-1} \ar[r] & B \op P_{k},
}
\end{equation}
where $\mathcal{F}_{W\op P}^\Sigma(\partial B\op P(k))$ and $\mathcal{F}_{W\op P}^\Sigma(B\op P(k))$ denote the free
bimodules in $\Sigma_{>0}\Bimod_{W\op P}$ generated by the $\Sigma$-sequences $\partial B\op P(k)$ and $B\op P(k)$
concentrated in the only arity~$k$. Therefore, given a reduced bimodule map $B\op P_{k-1}\xrightarrow{\lambda_{k-1}}Q{\circ} X$,
in order to extend it to  $B\op P_k\xrightarrow{\lambda_k}Q{\circ} X$, one has to define a $\Sigma_k$-equivariant 
map $B\op P(k)\xrightarrow{\lambda_k(k)}Q{\circ} X(k)$, such that $\lambda_k(k)|_{\partial B\op P(k)}=\lambda_{k-1}(k)$
and which respects the $\Lambda$ structure. The latter condition is equivalent to the commutativity of the square
$$
\xymatrix{
B\op P(k)\ar[rr]^{\lambda_k(k)} \ar[d] &{}& Q{\circ} X(k) \ar[d] \\
\mathcal{M}(B\op P)(k)=\mathcal{M}(B \op P_{k-1})(k) \ar[rr]^-{\mathcal{M}(\lambda_{k-1})(k)} &{}& \mathcal{M}(Q{\circ} X)(k).
}
$$
Consequently, one has the pullback diagram
\begin{equation}\label{eq:pullback1}
\xymatrix{
\Bimod_{W\op P}(B \op P_{k},\op Q {\circ} X)\ar[d] \ar[r] & \TopCat_{\Sigma_{k}}( B \op P(k)\,,\, \op Q{\circ} X(k)) \ar[d] \\
\Bimod_{W\op P}(B\op P_{k-1},\op Q{\circ} X) \ar[r] & \TopCat_{\Sigma_{k}}(\partial  B \op P(k)\,,\,\op Q{\circ} X(k)) \underset{\TopCat_{\Sigma_{k}}(\partial  B \op P(k)\,,\,\mathcal{M}(\op Q{\circ} X)(k))}{\bigtimes} \TopCat_{\Sigma_{k}}(  B \op P(k)\,,\,\mathcal{M}(\op Q{\circ} X)(k)),
}
\end{equation}
where $\TopCat_{\Sigma_{k}}$ is the model category of spaces equipped with an action of the symmetric group $\Sigma_{k}$ and $\mathcal{M}(-)$ is the matching object (\ref{B3}).  Since the operad $\op Q$ is assumed to be fibrant in the Reedy model category of reduced operads, the same is true for the bimodule $\op Q{\circ} X$ due  to Lemma~\ref{l:circle}. 
 Furthermore,  
   the inclusion from $\partial B \op P(k)$ into $B \op P(k)$ is a $\Sigma_{k}$-cofibration. Consequently, the vertical maps in the above diagram are Serre fibrations by applying  an alternative version of the pushout product axiom along the inclusion $\partial B \op P(k)\rightarrow B \op P(k)$ and the map $\op Q{\circ} X(k)\rightarrow \mathcal{M}(\op Q{\circ} X)(k)$.
  
  In case $\op P(1)\neq *$ one has to consider an auxiliary filtration in the inclusion $B\op P_{k-1}
  \subset B\op P_k$, $k\geq 1$:
  \[
  B\op P_{k-1} =B\op P_{k-1,-1}\subset B\op P_{k-1,0}\subset B\op P_{k-1,1}\subset B\op P_{k-1,2}\subset
  \ldots \subset B\op P_{k},
  \]
  where $B\op P_{k-1,i}$ is a subbimodule of $B\op P_k$ generated by the prime components 
  of arity $\leq k-1$ and also of arity $k$ with $\leq i$ vertices. This finer filtration has the advantage to control the number of bivalent vertices and, in particular, to deal with the unit axiom. An argument similar to the one above
  shows that each map 
  \[
  \Bimod_{W\op P}(B\op P_{k-1,i},\op Q{\circ} X) \to \Bimod_{W\op P}(B\op P_{k-1,i-1},\op Q{\circ} X) 
  \]
   is a Serre fibration. Indeed, one similarly to~\eqref{eq:sigma_pushout1} has a pushout square in $\Sigma_{>0}\Bimod_{W\op P}$
   \begin{equation}\label{eq:sigma_pushout2}
\xymatrix{
\mathcal{F}_{W\op P}^\Sigma(B\op P_{k-1,i-1}(k))\ar[r] \ar[d] & \mathcal{F}_{W\op P}^\Sigma(B\op P_{k-1,i}(k)) \ar[d] \\
B \op P_{k-1,i-1} \ar[r] & B \op P_{k-1,i}.
}
\end{equation}
This implies that one has a pullback square obtained from~\eqref{eq:pullback1} by replacing  $B\op P_{k-1}$ and 
$B\op P_k$ in the left column by $B\op P_{k-1,i-1}$ and $B\op P_{k-1,i}$, respectively,  and replacing $\partial B\op P(k)$ and
$B\op P(k)$ in the right column by $B\op P_{k-1,i-1}(k)$ and $B\op P_{k-1,i}$, respectively. 

Similarly, we prove that the second truncation map (\ref{truncMap2}) is a Serre fibration. For example assuming $\op P(1)=*$ and using the above notation, one has the following pullback diagram in which the space $X$ does not appear in the right-hand terms since it has been fixed in the space $\Bimod_{W\op P,X}(B\op P_{k-1},\op Q)$:
$$
\xymatrix{
\Bimod_{W\op P,X}(B \op P_{k},\op Q)\ar[d] \ar[r] &  \TopCat_{\Sigma_{k}}( B \op P(k)\,,\, \op Q(k)) \ar[d] \\
\Bimod_{W\op P,X}(B\op P_{k-1},\op Q) \ar[r] & \TopCat_{\Sigma_{k}}(\partial  B \op P(k)\,,\,\op Q(k)) \underset{\TopCat_{\Sigma_{k}}(\partial  B \op P(k)\,,\,\mathcal{M}(\op Q)(k))}{\bigtimes} \TopCat_{\Sigma_{k}}(  B \op P(k)\,,\,\mathcal{M}(\op Q)(k))
}
$$
So the vertical maps of the above diagram and the truncation map (\ref{truncMap2}) are also Serre fibrations.
\end{proof}

\begin{lemma}\label{lem:T1}
Suppose that $\op P(1)$ is contractible.
Then the natural map
\begin{equation}\label{mapsec1}
 \Bimod_{W\op P,X}(B\op P_{1},\op Q) \longrightarrow  \Bimod_{W\op P}(B\op P_{1},\op Q{\circ} X)
\end{equation}
is a weak equivalence.
\end{lemma}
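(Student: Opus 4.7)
The plan is to exhibit $\psi''$ as the pullback of a weak equivalence along a Serre fibration, using the projection $\op Q\to\ast$ to the terminal operad.  This induces a $W\op P$-bimodule map $\op Q{\circ}X\to\ast{\circ}X$, and I would consider the commutative square
\[
\begin{tikzcd}
\Bimod_{W\op P,X}(B\op P_1,\op Q) \ar[r, "\psi''"] \ar[d] & \Bimod_{W\op P}(B\op P_1,\op Q{\circ}X) \ar[d] \\
X \ar[r, "\iota"] & \Bimod_{W\op P}(B\op P_1,\ast{\circ}X),
\end{tikzcd}
\]
in which the left vertical is $(x,f)\mapsto x$, the right vertical is post-composition with $\op Q{\circ}X\to\ast{\circ}X$, and $\iota(x)$ is the bimodule map sending $b\in B\op P_1(n)$ to $(\ast,x,\dots,x)$.

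First I would check this square is a strict pullback of spaces.  Unwinding the $W\op P$-bimodule structure on $\op Q{\circ}X$ from Example~\ref{B7}, a bimodule map $\phi:B\op P_1\to\op Q{\circ}X$ whose projection to $\ast{\circ}X$ coincides with $\iota(x)$ has its $X^n$-coordinates identically equal to $(x,\dots,x)$; the right-twist formula then forces the remaining $\op Q$-component $\phi_1$ to satisfy $\phi_1(b\circ^i p)=\phi_1(b)\circ^i\delta_x(p)$, that is, to be a bimodule map $B\op P_1\to\op Q_x$.  The pullback is therefore exactly $\Bimod_{W\op P,X}(B\op P_1,\op Q)$.  Next I would verify that the right vertical is a Serre fibration: the map $\op Q{\circ}X\to\ast{\circ}X$ is a Reedy fibration in $\Bimod_{W\op P}$, since in arity $n\geq 2$ the induced map to the fibre product with the matching object reduces to the Reedy fibration $\op Q(n)\to\mathcal{M}(\op Q)(n)$ (matching objects commute with the objectwise product by $X^{\times\bullet}$, cf.\ Lemma~\ref{l:circle}), while in arity $1$ it is the projection $\op Q(1)\times X\to X$.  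Since $B\op P_1=\mathcal{F}_{B_1}((B\op P)_{\leq 1})$ is cofibrant in $\Bimod_{W\op P}$ (as the image of a cofibrant object under the left Quillen functor $\mathcal{F}_{B_1}$, cf.\ Theorem~\ref{D8}), mapping out of it into a Reedy fibration yields a Serre fibration.

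The main remaining step is to show that $\iota$ is a weak equivalence.  Via the truncation adjunction,
\[
\Bimod_{W\op P}(B\op P_1,\ast{\circ}X)=\Bimod_{W\op P;\leq 1}((B\op P)_{\leq 1},X),
\]
which is the space of bi-$W\op P(1)$-invariant maps $B\op P(1)\to X$ (the arity-$1$ component of $\ast{\circ}X$ is simply $X$ with trivial bi-action).  The map $\iota$ admits a retraction via evaluation at the basepoint $\ast\in B\op P(1)$, so it suffices to show this evaluation is a weak equivalence.  Concretely, $B\op P(1)$ is a bar-type construction whose points are strictly-decreasing chains of heights in $[0,1]$ labelled by elements of $W\op P(1)$, with the left (resp.\ right) $W\op P(1)$-action extending or concatenating labels at the height-$0$ (resp.\ height-$1$) end; the bi-orbit space is then the simplicial realisation of chains whose heights all lie in the open interval $(0,1)$, which is weakly contractible under the hypothesis $\op P(1)\simeq\ast$ (and hence $W\op P(1)\simeq\ast$ by Theorem~\ref{I0}).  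Equivalently and more structurally, $(B\op P)_{\leq 1}$ is a cofibrant resolution of the weakly contractible unit bi-$W\op P(1)$-module $\op P(1)$, so the derived mapping space into $X$ with trivial action is simply $X$ itself.

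The main obstacle lies in this last step: making rigorous that the derived mapping space of bi-$W\op P(1)$-modules from $(B\op P)_{\leq 1}$ into $X$ with trivial action is $X$.  Once granted, combining the pullback property with the fibration check yields that $\psi''$ is a weak equivalence by base change.
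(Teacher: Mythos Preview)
Your argument is correct, and the pullback-along-a-fibration structure is sound: the square is indeed a strict pullback (your unwinding of the twisted right action is right), the map $\op Q{\circ}X\to *{\circ}X$ is a Reedy fibration of $W\op P$-bimodules for exactly the reasons you state, and $B\op P_1$ is cofibrant, so the right vertical is a Serre fibration.  However, your route is quite different from the paper's, and in a sense more elaborate than necessary.

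The paper does not set up a pullback square at all.  Instead it shows directly that \emph{both} spaces are weakly equivalent to $\op Q(1)\times X$, via a commutative triangle whose third vertex is $\op Q(1)\times X$ and whose arrows are ``evaluate at the unit''.  The key input is that the inclusion $\mathbbm{1}_{\leq 1}\hookrightarrow (W\op P)_{\leq 1}$ is a weak equivalence of $1$-truncated operads (this is where $\op P(1)\simeq *$ enters), so that restriction along it induces equivalences on derived $1$-truncated bimodule mapping spaces; and over $\mathbbm{1}$ a $1$-truncated reduced bimodule is just a bare space, whence $\Bimod^h_{W\op P;\leq 1}((B\op P)_{\leq 1},(\op Q{\circ}X)_{\leq 1})\simeq(\op Q{\circ}X)(1)=\op Q(1)\times X$.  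The same computation handles $\Bimod_{W\op P,X}(B\op P_1,\op Q)$.

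What your approach buys is a clean structural explanation: the map $\psi''$ is literally the base change of $\iota$ along a fibration.  What it costs is that your ``main obstacle'' --- showing $\iota\colon X\to\Bimod_{W\op P}(B\op P_1,*{\circ}X)$ is a weak equivalence --- is exactly the lemma itself in the degenerate case $\op Q=*$, so your reduction does not actually lighten the load.  Your bi-orbit description of this last step is on the right track but imprecise (the quotient is not literally the space of ``open'' chains), and the cleanest way to make it rigorous is precisely the paper's reduction to $\mathbbm{1}$.  In short: your pullback step is a correct extra layer that reduces general $\op Q$ to $\op Q=*$, after which you still need the paper's argument; the paper simply runs that argument once for arbitrary $\op Q$.
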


\begin{proof}
We shall in fact show that the arrows in the commutative diagram 
\[
\begin{tikzcd}
\Bimod_{W\op P,X}(B\op P_{1},\op Q)=\Bimod_{W \op P,X,\leq 1}(B\op P,\op Q) \ar{r}{\sim} \ar{d}{\sim} & \op Q(1) \times X \\
\Bimod_{W\op P}(B\op P_{1},\op Q{\circ} X)=\Bimod_{W\op P,\leq 1}(B\op P_{\leq 1},(\op Q{\circ} X)_{\leq 1}) \ar{ru}{\sim} & 
\end{tikzcd}
\]
are weak equivalences. Let $\mathbbm{1}$ denote the initial element in the category of reduced operads.
It is a point in arity one and empty in all the other arities $\geq 2$. Since $\op P(1)$ is contractible, the natural inclusion 
$\mathbbm{1}_{\leq 1} \to W\op P_{\leq 1}$ is a weak equivalence of 1-truncated reduced operads. As
a
consequence 
 for 1-truncated  $W\op P$-bimodules $\op M$, $\op M'$ the restriction map 
\[
\Bimod^h_{W\op P;\leq 1}(\op M, \op M') \to \Bimod^h_{\mathbbm{1};\leq 1}(\op M, \op M')
\]
is a weak equivalence. On the other hand, a reduced 1-truncated bimodule $\op M$ over~$\mathbbm{1}$ 
is just a space $\op M(1)$ with no additional structure. Thus provided $\op M(1)$ is a cofibrant space, 
\[
\Bimod^h_{\mathbbm{1};\leq 1}(\op M, \op M')=\Bimod_{\mathbbm{1};\leq 1}(\op M, \op M')=\TopCat(\op M(1),\op M'(1)).
\]

 Hence we find
\[
\Bimod_{W\op P,\leq 1}(B\op P_{\leq 1},(\op Q{\circ} X)_{\leq 1})
\simeq
\Bimod_{\mathbbm{1},\leq 1}(\mathbbm{1}_{\leq 1},(\op Q{\circ} X)_{\leq 1})
\simeq
(\op Q{\circ} X)(1) = \op Q(1) \times X.
\]
Here the map to the right-hand side is given by taking the image of the unit element.

By essentially the same argument we show that the map $\Bimod_{W\op P,X,\leq 1}(B\op P,\op Q) \to \op Q(1) \times X$ is a weak equivalence. This then shows the Lemma.
\end{proof}

\begin{rem}\label{r:framed_bim_oper}
We briefly explain~\eqref{eq:fr_nfr} and~\eqref{eq:fr_nfr2}. One has that $\op P$ and $\op Q$ are reduced, $\op P(1)\simeq \op Q(1)\simeq *$, $G$ is connected
and $G\nsimeq *$.   For simplicity, assume that $\op P(1)=*$. 
Recall~\eqref{eq:tr_op_map},~\eqref{eq:tr_bim_map} and the pullback square~\eqref{eq:pullback1}.  
 For the 1-truncated derived mapping spaces 
\[
\Bimod_{\op P;\leq 1}^h(\op P_{\leq 1},(\op Q{\circ} G)_{\leq 1})\simeq G\nsimeq \Bimod_{\op P;\leq 1}^h(\op P_{\leq 1},\op Q_{\leq 1})\simeq *,
\]
while
\[
\Operad_{\leq 1}(\op P_{\leq 1},\op (Q{\circ} G)_{\leq 1})\simeq*\simeq \Operad_{\leq 1}(\op P_{\leq 1},\op Q_{\leq 1}).
\]
On the other hand, for any $k\geq 2$, one has weak equivalences of fibers
\begin{multline*}
\mathrm{fiber}\bigl(\Bimod_{\op P;\leq k}^h(\op P_{\leq k},(\op Q{\circ} G)_{\leq k})\to \Bimod_{\op P;\leq k-1}^h(\op P_{\leq k-1},(\op Q{\circ} G)_{\leq k-1})
\bigr) \simeq\\
 \mathrm{fiber}\bigl(\Bimod_{\op P;\leq k}^h(\op P_{\leq k},\op Q_{\leq k})\to \Bimod_{\op P;\leq k-1}^h(\op P_{\leq k-1},\op Q_{\leq k-1})
\bigr),
\end{multline*}
\begin{multline*}
\mathrm{fiber}\bigl(\Operad_{\leq k}^h(\op P_{\leq k},(\op Q{\circ} G)_{\leq k})\to \Operad_{\leq k-1}^h(\op P_{\leq k-1},(\op Q{\circ} G)_{\leq k-1})
\bigr) \simeq\\
 \mathrm{fiber}\bigl(\Operad_{\leq k}^h(\op P_{\leq k},\op Q_{\leq k})\to \Operad_{\leq k-1}^h(\op P_{\leq k-1},\op Q_{\leq k-1})
\bigr).
\end{multline*}
For example, for the first equivalence above, we notice that the fibers are described as the spaces of $\Sigma_k$-equivariant
 lifts in the squares
$$
\xymatrix{
\partial B \op P(k)\ar[r] \ar[d] & Q{\circ} G(k) \ar[d] \\
B \op P(k) \ar[r]\ar@{..>}[ru] & \mathcal{M}(Q{\circ} G)(k)
}
\quad\quad\quad 
\xymatrix{
\partial B \op P(k)\ar[r] \ar[d] & Q(k) \ar[d] \\
B \op P(k) \ar[r]\ar[r]\ar@{..>}[ru] & \mathcal{M}(Q)(k),
}
$$
respectively.
These fibers are equivalent, because for $\op Q$ Reedy fibrant, the fibers of the maps $\op Q(k)\to \mathcal{M}(Q)(k)$ and $\op Q{\circ}G(k)\to \mathcal{M}(Q{\circ}G)(k)$ differ only for $k=1$, see the proof of Lemma~\ref{l:circle}.
\end{rem}

\begin{thm}\label{THmPart1}
If $\op P(1)$ is contractible, then the map 
$$
\Bimod_{W\op P,X}(B\op P, \op Q)\longrightarrow \Bimod_{W\op P}(B\op P, \op Q{\circ} X)
$$
is a weak equivalence.
\end{thm}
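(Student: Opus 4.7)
The plan is to analyze $\psi''$ by restricting to the arity-one part via the commutative square
\[
\begin{tikzcd}
\Bimod_{W\op P,X}(B\op P, \op Q) \ar{r}{\psi''} \ar{d} & \Bimod_{W\op P}(B\op P, \op Q {\circ} X) \ar{d} \\
\Bimod_{W\op P,X}(B\op P_1, \op Q) \ar{r}{\sim} & \Bimod_{W\op P}(B\op P_1, \op Q {\circ} X),
\end{tikzcd}
\]
in which the vertical restriction maps are Serre fibrations by Lemma~\ref{lem:fib} and the bottom map is a weak equivalence by Lemma~\ref{lem:T1}. From the long exact sequences of homotopy groups associated with these two fibrations, it will suffice to exhibit a natural homeomorphism between the fibers of the two vertical maps over corresponding basepoints.

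The structural input that makes this fiber identification possible is the following constraint on any bimodule map $g\colon B\op P \to \op Q{\circ} X$. Writing $g(p) = (q(p), x_1(p), \ldots, x_n(p))$ for $p \in B\op P(n)$, compatibility of $g$ with the $\Lambda$-structure on $\op Q{\circ}X$ forces $x_i(p) = x_1(\pi_i^{\ast}(p))$, where $\pi_i\colon [1]\to [n]$ is the injection that selects the $i$-th leaf; indeed, the $\Lambda$-action sends the tuple $(q, x_1, \ldots, x_n)$ to $(\pi_i^\ast q, x_i)$. Hence every $X$-coordinate factors through the single continuous function $x_1\colon B\op P(1) \to X$. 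Theorems~\ref{I0} and~\ref{D8} give $B\op P(1)\simeq \op P(1)\simeq \ast$, so $B\op P(1)$ is path-connected and $x_1$ is constant with value some $x \in X$. Consequently $g$ takes the form $g(p) = (f(p),x,\ldots,x)$ for a unique pair $(x,f)$ with $f\colon B\op P \to \op Q_x$ a bimodule map, that is, $g = \psi''(x,f)$.

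Applying this factorization to extensions of a fixed arity-one restriction $g_1$ with $g_1(\iota(\ast_1)) = (q_1, x)$ yields a canonical bijection between the fiber of the right vertical map over $g_1$ and the fiber of the left vertical map over the corresponding point $(x, f_1)$, where $f_1$ is obtained by projecting $g_1$ onto $\op Q$; an inspection of the compact-open topologies shows this bijection is actually a homeomorphism. Combined with the bottom weak equivalence and the two fibration properties, this fiberwise homeomorphism implies, by a standard five-lemma argument on homotopy groups, that $\psi''$ is a weak equivalence. The main obstacle is justifying the rigidity of the $X$-coordinates: one must be certain that $\Lambda$-equivariance genuinely reduces them to a single constant, which in turn depends on the path-connectedness of $B\op P(1)$ and hence on the hypothesis that $\op P(1)$ is contractible.
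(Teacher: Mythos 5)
Your overall strategy — compare the two restriction fibrations of Lemma~\ref{lem:fib}, use Lemma~\ref{lem:T1} for the base, and identify the fibers using the $\Lambda$-equivariance constraint $x_i(p) = x_1(\pi_i^*(p))$ — is exactly the argument the paper uses, and the $\Lambda$-equivariance observation is the correct "structural input." However, there is a genuine error in the middle of your argument. You claim that because $B\op P(1)$ is path-connected, the continuous map $x_1\colon B\op P(1)\to X$ must be constant, and you deduce that every bimodule map $g\colon B\op P\to \op Q{\circ}X$ is of the form $\psi''(x,f)$, i.e.\ that $\psi''$ is surjective (in fact bijective). Path-connectedness of the source does not make a continuous map constant. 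When $\op P(1)$ is contractible but not literally a point, $W\op P(1)$ and $B\op P(1)$ are nontrivial contractible spaces, and the left/right $W\op P(1)$-actions on $B\op P(1)$ do not act transitively (e.g.\ a single vertex at an interior height $t_v\in\,]0,1[$ with non-unital label is not in the orbit of the bare edge); the invariance of $x_1$ under those actions therefore does not force constancy, and $\psi''$ is not a homeomorphism in general. That is precisely why the theorem claims only a weak equivalence.

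The fix is short and it is where the fibration comparison really earns its keep. You should not try to show that every $g$ lies in the image of $\psi''$; instead, only compare fibers over base points that lie in the image of the arity-one truncation $\Bimod_{W\op P,X}(B\op P_1,\op Q)\to\Bimod_{W\op P}(B\op P_1,\op Q{\circ}X)$. For such a base point $g_1=\psi''(x,f_1)$, the $X$-component $x_1$ \emph{is} constant equal to $x$ by the very definition of $\psi''$, not by any connectivity argument; and then your $\Lambda$-equivariance identity forces every extension $g$ of $g_1$ to $B\op P$ to have all $X$-coordinates constantly equal to $x$, giving the desired homeomorphism of fibers onto $\Bimod_{W\op P}(B\op P,\op Q_x)$-extensions. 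Since the base map is a weak equivalence by Lemma~\ref{lem:T1}, every path component of the lower base contains such an image point, and the long-exact-sequence/five-lemma comparison then yields that $\psi''$ is a weak equivalence. Note also that your phrase ``with $g_1(\iota(\ast_1))=(q_1,x)$'' is weaker than what you need: one must fix the value of $g_1$ on all of $B\op P_1$, not just at the unit, to pin down the fiber.
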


\begin{proof}
We compare the two fibrations (cf. Proposition~\ref{lem:fib})
\[
\xymatrix{
Y_{1}\ar[r] \ar[d]& \Bimod_{W\op P,X}(B\op P,\op Q) \ar[r]^{f}\ar[d] & \Bimod_{W\op P,X}(B\op P_{1},\op Q) \ar[d]^{h}  \\
Y_{2}\ar[r] & \Bimod_{W\op P}(B\op P,\op Q{\circ} X) \ar[r]_{g} & \Bimod_{W\op P}(B\op P_{1},\op Q{\circ} X), 
}
\]
where $Y_{1}$ is the fiber $f^{-1}(\alpha)$ over a point $\alpha\in \Bimod_{W\op P,X}(B\op P_{1},\op Q)$ and $Y_{2}=g^{-1}(h(\alpha))$. In particular, for any arity $i$, $h(\alpha)(i)\colon B\op P_{1}(i)\to Q(i)\times X^{\times i}$ is the constant map  on the factor $X^{\times i}$ with value $(x,\ldots,x)$ for some $x\in X$. Thanks to the identification $u^{\ast}{\circ}  h(\alpha)(n)= h(\alpha)(1){\circ}  u^{\ast}$, for any $u\in \Lambda_{+}([1],[n])$, any point in the fiber $Y_{2}=g^{-1}(h(\alpha))$ is also constant with value $(x,\ldots,x)$ on the factor $X^{\times -}$ and the two fibers $Y_{1}$ and $Y_{2}$ coincide.
Since $\op P(1)$ is contractible by assumption we may use Lemma \ref{lem:T1} to conclude that the right-hand vertical map is a weak equivalence. Hence so must be the middle vertical map.
\end{proof}

\subsection{The map from the homotopy fiber}\label{sec:map_from_fib}

Furthermore, one has a natural map
\beq{equ:hofibermapdef}
\psi':\hofiber( X\to \Operad(W\op P,\op Q) ) \to \Bimod_{W\op P,X}(B \op P,\op Q ).
\eeq
First, an element of the homotopy fiber on the left-hand side is a pair $(x,g)$ with $x\in X$ and 
 a path $g$ in $\Operad(W\op P,\op Q)$ connecting $\delta_{\ast}$ (at $t=0$) and $\delta_{x}$ (at $t=1$). Concretely, $g$ is a family of continuous maps 
$$
g_n\colon W\op P(n)\times [0\,,\,1]  \longrightarrow \op Q(n),\,\,  n\geq 1,
$$
satisfying the relations:
\begin{itemize}
\item[$\blacktriangleright$] $g_{n}(\iota(\ast_{1})\,;\, t)=\ast_{1}'$\hspace{98pt} $\forall t\in [0\,,\,1]$,\vspace{5pt}
\item[$\blacktriangleright$] $g_{n+m}(y_1{\circ}_{i}y_2,t)=g_{n+1}(y_1,t){\circ}_{i}g_{m}(y_2,t),$ 
\hspace{10pt} $\forall t\in [0\,,\,1]$,  $y_1\in W\op P(n+1)$, $y_2\in W\op P(m)$ and $i\in \{1,\ldots , n+1\}$,
\vspace{5pt}
\item[$\blacktriangleright$] $g_{n}(y,0)=\delta_{\ast}(y)=\eta{\circ}\mu (y)$, \hspace{59pt}  $\forall y\in W\op P(n)$,
\vspace{5pt}
\item[$\blacktriangleright$] $g_{n}(y,1)=\delta_{x}(y)$, \hspace{97pt}  $\forall y\in W\op P(n)$,
\vspace{5pt}
\item[$\blacktriangleright$] $g_{m}(u^{\ast}(y),t)=u^*(g_{n}(y,t))$, \hspace{59pt}  $\forall t\in [0\,,\,1]$, $y\in W\op P(n)$ and $u\in \Lambda([m],[n])$.
\vspace{7pt}
\end{itemize}

Let $(x\,;\, g)$ be an element in the homotopy fiber and let $[T\,;\,\{t_{v}\}\,;\,\{x_{v}\}]$ be a point in $B\op P$. It is a tree $T$ with each vertex $v$ labelled by a pair $(x_v,t_v)$. 
  The map $\psi'$ sends $(x\,;\,g)$ to the pair $(x\,;\,\psi'(g))$, where 
  $\psi'(g)([T\,;\,\{t_{v}\}\,;\,\{x_{v}\}])$ is defined as follows. One replaces each label $(x_v,t_v)$
  by $g_{|v|}(x_v,t_v)$ and then one composes the new labels using the structure of $T$ and the
  composition maps of the operad  $\op Q$.
 For instance, the image of the point $[T\,;\,\{t_{v}\}\,;\,\{x_{v}\}]\in B\op P(6)$ associated to the operadic composition in Figure \ref{B2} is the following one:\vspace{3pt}
$$
\begin{array}{cl}\vspace{5pt}
\psi'(g)([T\,;\,\{t_{v}\}\,;\,\{x_{v}\}]) & = g_{2}(x_{1},t_1)\big( g_{3}(a,1)\,;\,g_{3}(x_{2},t_2)\big),  \\ 
 & =g_{2}(x_{1},t_1)\big( \delta_x(a)\,;\,g_{3}(x_{2},t_2)\big).
\end{array} \vspace{3pt}
$$

We will derive our main result \eqref{equ:main} from the following  statements.

\begin{lemma}
The map $\pi:\Bimod_{W\op P, X}(B\op P, Q)\to X$ is a Serre fibration.
\end{lemma}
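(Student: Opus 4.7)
The plan is to deduce that $\pi$ is a Serre fibration from the filtration $\{B\op P_k\}_{k\geq 0}$ of $B\op P$ displayed in \eqref{C1} together with Lemma~\ref{lem:fib}. First I would observe that
\[
\Bimod_{W\op P, X}(B\op P, \op Q) \cong \lim_k \Bimod_{W\op P, X}(B\op P_k, \op Q),
\]
since $B\op P$ is the colimit of its filtration and the bimodule mapping space sends colimits of source bimodules to limits of spaces. For each $k$ let $\pi_k \colon \Bimod_{W\op P, X}(B\op P_k, \op Q) \to X$ be the canonical projection; then $\pi$ is the induced map on the inverse limit.

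Next I would prove by induction on $k$ that each $\pi_k$ is a Serre fibration. In the base case $k=0$ the bimodule $B\op P_0$ is initial (empty in all arities $\geq 1$), so $\Bimod_{W\op P, X}(B\op P_0, \op Q) = X$ and $\pi_0 = \mathrm{id}_X$ is trivially a Serre fibration. For the inductive step the restriction map $\Bimod_{W\op P, X}(B\op P_k, \op Q) \to \Bimod_{W\op P, X}(B\op P_{k-1}, \op Q)$ is a Serre fibration by Lemma~\ref{lem:fib}, and $\pi_k$ factors as this restriction followed by $\pi_{k-1}$, hence is a Serre fibration by composition.

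Finally, given any lifting problem for $\pi$ against a cofibration $D^n \hookrightarrow D^n \times [0,1]$, I would solve it one filtration level at a time: at level $k$ I would lift along the Serre fibration $\Bimod_{W\op P, X}(B\op P_k, \op Q) \to \Bimod_{W\op P, X}(B\op P_{k-1}, \op Q)$, with initial data coming from the given map at time $0$ together with the already constructed lift at level $k-1$. These levelwise lifts assemble into a compatible system and thus produce a lift into the inverse limit $\Bimod_{W\op P, X}(B\op P, \op Q)$. The technical heart of the argument has already been carried out in Lemma~\ref{lem:fib}, so I do not expect a substantial additional obstacle here.
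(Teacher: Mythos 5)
Your reduction to a tower of fibrations is close in spirit to the paper's ``alternative proof,'' but there is a genuine gap at the bottom of the tower. Lemma~\ref{lem:fib} states (and its proof establishes) that the truncation maps $\Bimod_{W\op P,X}(B\op P_k,\op Q)\to\Bimod_{W\op P,X}(B\op P_{k-1},\op Q)$ are Serre fibrations only for $k\geq 2$: the lemma's target is the level-$1$ mapping space $\Bimod_{W\op P,X}(B\op P_1,\op Q)$, not the level-$0$ one. Your induction therefore breaks at the step from $k=0$ to $k=1$, where you would need $\pi_1\colon\Bimod_{W\op P,X}(B\op P_1,\op Q)\to\Bimod_{W\op P,X}(B\op P_0,\op Q)=X$ to be a Serre fibration. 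That is exactly the map not covered by Lemma~\ref{lem:fib}, and it is not formal: the identification $\Bimod_{W\op P,X}(B\op P_0,\op Q)=X$ is precisely where the extra $X$-coordinate built into the definition of $\Bimod_{W\op P,X}$ enters, so something genuinely has to be checked there.

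This missing step is the whole content of the paper's alternative proof. When $\op P(1)=*$, one has $W\op P(1)=*$ and $B\op P(1)=*$, so $\pi_1$ is just the projection $\op Q(1)\times X\to X$, which is a fibration. When $\op P(1)\neq *$, the paper introduces the auxiliary filtration $B\op P_{0,-1}\subset B\op P_{0,0}\subset B\op P_{0,1}\subset\cdots$ by the number of vertices in the arity-one prime components and checks each stage is a fibration. (The paper's primary proof avoids the filtration entirely and builds an explicit lift by cutting a heighted tree at level $1-t/2$, rescaling the lower piece, and decorating the upper pieces via $\delta_{g(x,\cdot)}$.) So your outline is sound once the $k=1$ step is supplied, but that step is exactly what Lemma~\ref{lem:fib} does not give you, and it is the nontrivial part of the argument rather than a detail already dispatched.
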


\begin{proof}
The statement is an immediate consequence of Proposition~\ref{lem:fib} for $\ell=0$, $k=\infty$. Indeed, $B\op P_0(i)=
\emptyset$, 
 $i\geq 1$. Consequently, $\Bimod_{W\op P, X}(B\op P_0, Q)= X$.
\end{proof}

Now, our main result is the following.
\begin{thm}\label{ThmPart2}
If $Q(1)$ is weakly contractible and $X$ is a path-connected pointed space, then the following is a homotopy fiber sequence
\[
\Bimod_{W\op P,X}(B\op P, \op Q) \to X \to \Operad(WP, Q),
\]
and the weak equivalence $\psi'\colon\hofiber(X \to \Operad(WP, Q))\to \Bimod_{W\op P,X}(B\op P, \op Q)$ is the map of section \ref{sec:map_from_fib}.
\end{thm}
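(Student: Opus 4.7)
The plan is to compare the natural Serre fibration $\pi\colon\Bimod_{W\op P,X}(B\op P,\op Q)\to X$ (from the Serre fibration lemma established just above) with the principal fibration $\hofiber(X\to \Operad(W\op P,\op Q))\to X$ via the map $\psi'$ of Section~\ref{sec:map_from_fib}. The whole argument then reduces to Ducoulombier's Theorem~\ref{thm:duc}.

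First I would identify the fiber of $\pi$ over the basepoint $*\in X$. A point in $\pi^{-1}(*)$ is a pair $(*,f)$ with $f\in \Bimod_{W\op P}(B\op P,\op Q_{*})$; since $\delta_{*}=\eta\circ\mu$, the bimodule $\op Q_{*}$ is just $\op Q$ with its standard $W\op P$-bimodule structure coming from $\eta\circ\mu$, so $\pi^{-1}(*)=\Bimod_{W\op P}(B\op P,\op Q)$. Theorem~\ref{thm:duc} — whose hypothesis $\op Q(1)\simeq *$ is exactly what we are assuming — then supplies a weak equivalence
\[
\xi\colon\Omega\Operad(W\op P,\op Q)\xrightarrow{\sim} \Bimod_{W\op P}(B\op P,\op Q).
\]

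Next I would assemble the diagram
\[
\begin{tikzcd}
\Omega \Operad(W\op P,\op Q) \ar{r}\ar{d}{\xi} & \hofiber(X\to \Operad(W\op P,\op Q))\ar{r}\ar{d}{\psi'} & X\ar{d}{=} \\
\Bimod_{W\op P}(B\op P,\op Q) \ar{r} & \Bimod_{W\op P,X}(B\op P,\op Q) \ar{r}{\pi} & X
\end{tikzcd}
\]
in which both rows are fiber sequences: the top one by the very definition of the homotopy fiber, the bottom one by the Serre fibration lemma just established. The right-hand square commutes tautologically. The content lies in verifying that the left-hand square commutes up to homotopy — i.e.\ that the restriction of $\psi'$ to the fiber over the basepoint recovers Ducoulombier's map $\xi$. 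Comparing the explicit recipe for $\psi'$ in Section~\ref{sec:map_from_fib} (replace each vertex label $(x_v,t_v)$ of a decorated tree by $g_{|v|}(x_v,t_v)\in \op Q(|v|)$ and compose in $\op Q$) with the formula for $\xi$ recalled in Section~\ref{sec:ducmap}, one sees that the two are given by the very same formula, once one notes that a loop at $\delta_{*}$ in $\Operad(W\op P,\op Q)$ is exactly a path in $\hofiber(X\to \Operad(W\op P,\op Q))$ lying over the basepoint $*\in X$.

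Once commutativity of the diagram is established, the theorem follows by a standard comparison of the long exact sequences of homotopy groups of the two fibrations, together with the five lemma. Path-connectedness of $X$ is used to control $\pi_0$ in that comparison, and the weak equivalence $\xi$ in the left column then forces $\psi'$ to be a weak equivalence, identifying $\Bimod_{W\op P,X}(B\op P,\op Q)$ with $\hofiber(X\to \Operad(W\op P,\op Q))$. The main obstacle will be the careful bookkeeping of basepoints required to check commutativity of the left square; everything else is formal.
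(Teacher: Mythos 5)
Your proposal is correct and follows essentially the same route as the paper: compare the two horizontal fiber sequences via $\psi'$, identify the fiber of $\pi$ over the basepoint with $\Bimod_{W\op P}(B\op P,\op Q)$, invoke Theorem~\ref{thm:duc} to see the map on fibers is a weak equivalence, and conclude for the total spaces. The paper's own proof is terser (it does not spell out the commutativity of the left square nor the five-lemma step), but the diagram, the appeal to the Serre-fibration lemma, and the reduction to Ducoulombier's delooping theorem are exactly the same.
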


\begin{proof}
We compare the two (horizontal) homotopy fiber sequences
\[
\begin{tikzcd}
\Omega( \Operad^h(\op P,\op Q))\ar{r}\ar{d} {\simeq}&  \hofiber( X\to \Operad^h(\op P,\op Q)) \ar{r}\ar{d}{\psi'}& X \ar{d}{=} \\
 \Bimod_{W\op P}(B\op P,\op Q) \ar{r} & \Bimod_{W\op P,X}(B\op P,\op Q) \ar{r}& X
\end{tikzcd}
\]
The left-hand vertical arrow is a weak equivalence by Theorem \ref{thm:duc}, and so is the right-hand vertical arrow.
We conclude that the middle vertical arrow must be a weak equivalence as well.  
\end{proof}

\subsection{A weak equivalence of Swiss-Cheese algebras}\label{Final2}

The one dimensional Swiss-Cheese operad $\mathcal{SC}_{1}$ is a two coloured operad with set of colours $S=\{o\,,\,c\}$ introduced by Voronov~\cite{Voronov}. It is a relative version of the one dimensional little discs operad $\lD_{1}$ defined as follows:
$$
\mathcal{SC}_{1}(n,m;k):=\left\{
\begin{array}{ll}\vspace{3pt}
\lD_{1}(n) & \text{if } m=0 \text{ and } k=c, \\ \vspace{3pt}
\left\{ \{c_{i}:[0\,,\,1]\rightarrow [0\,,\,1]\}_{1\leq i \leq n+1}\in \lD_{1}(n+1) \,\big| \, c_{n+1}(1)=1 \right\} & \text{if } m=1 \text{ and } k=o, \\ 
\emptyset & \text{otherwise},
\end{array} 
\right.
$$
An algebra over $\mathcal{SC}_{1}$ is given by a pair of topological spaces $(A\,,\,B)$ such that $A$ is a $\lD_{1}$-algebra and $B$ is a left module over $A$. A typical example of $\mathcal{SC}_{1}$-algebra is a pair of spaces of the form 
$$
\left(\, \Omega Y \,;\, \Omega(Y\,;\,X)=\hofiber(f:X\rightarrow Y)\,\right),
$$ 
where $f:X\rightarrow Y$ is a map of pointed spaces. In particular, we are interested in the case $Y=\Operad(W\op P, \op Q)$ based on the composite map $\eta{\circ}\mu :W\op P\rightarrow \op P \rightarrow \op Q$. So, the pair 
$$
\left( \Omega \Operad(W\op P, \op Q) \,;\, \hofiber(X\rightarrow \Operad(W\op P, \op Q))\right)
$$
is a $\mathcal{SC}_{1}$-algebra. Moreover, in \cite[Section 2.3]{Duc}, we build an explicit $\lD_{1}$-algebra structure on the space $\Bimod_{W\op P}(B\op P,\op Q)$ making the maps (\ref{Mapcase1}) into weak equivalences of $\lD_{1}$-algebras. In this section, we extend this construction in order to get an explicit $\mathcal{SC}_{1}$-algebra structure on the pair 
\begin{equation}\label{SCalg}
\left(  \Bimod_{W\op P}(B\op P, \op Q) \,;\,  \Bimod_{W\op P}(B\op P, \op Q{\circ} X)\right).
\end{equation}
For this purpose we build maps 
$$
\alpha_{n,o}:\mathcal{SC}_{1}(n,1;o)\times \Bimod_{W\op P}(B\op P, \op Q)^{\times n} \times \Bimod_{W\op P}(B\op P, \op Q{\circ} X) \longrightarrow \Bimod_{W\op P}(B\op P, \op Q{\circ} X) 
$$
compatible with operadic structure of $\mathcal{SC}_{1}$.

\begin{figure}[!h]
\begin{center}
\includegraphics[scale=0.3]{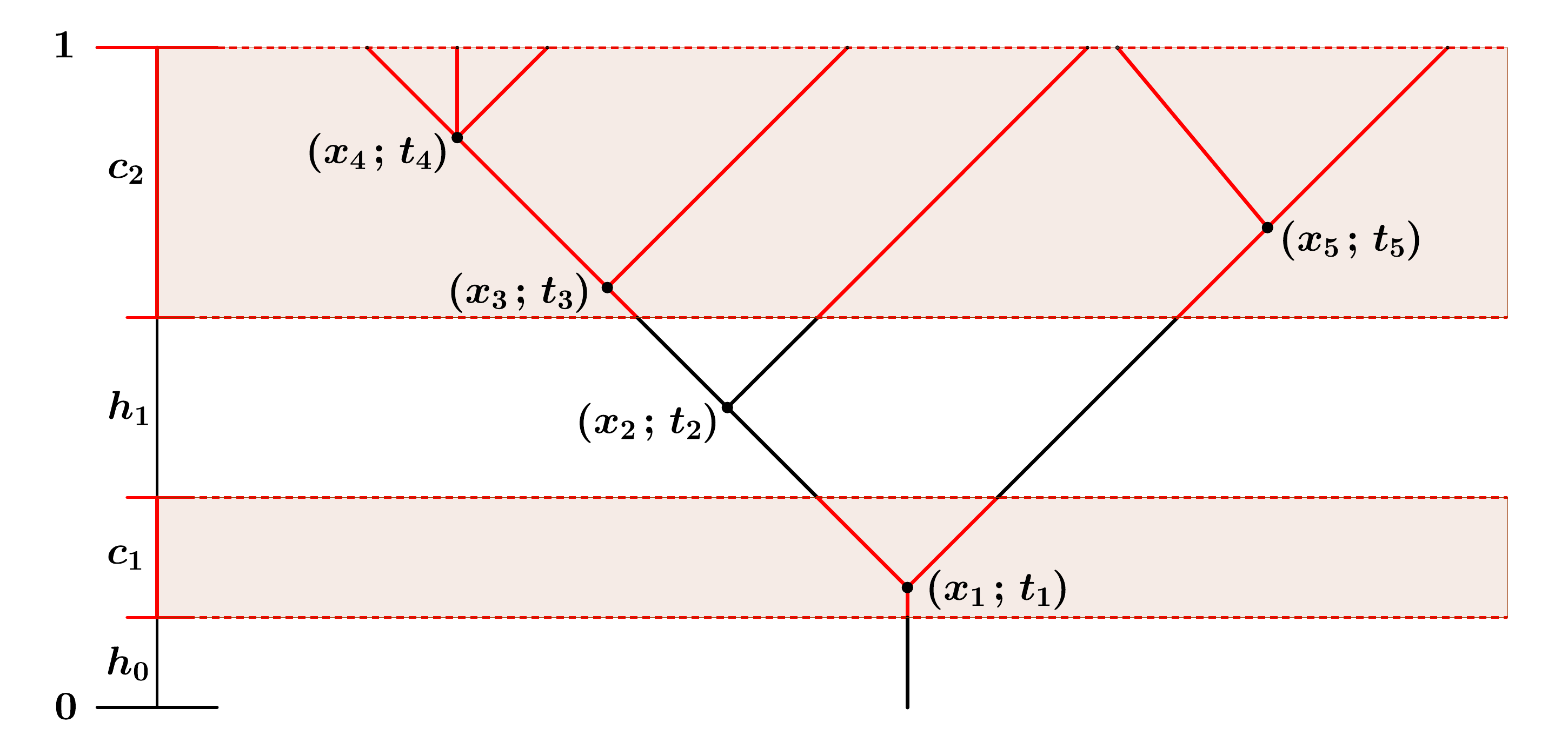}
\caption{Illustration of the subdivision of a point in $B\op P$ with the conditions\\  $c_{1}(0)< t_{1}<c_{1}(1)<t_{2}<c_{2}(0)<t_{3},t_{4},t_{5}<c_{2}(1)$.}\label{G0}
\end{center}
\end{figure}

From now on, we fix a family $c=\{c_{i}:[0\,,\,1]\rightarrow [0\,,\,1]\}_{1\leq i \leq n+1}\in \mathcal{SC}_{1}(n,1;o)$ as well as a family of bimodule maps $f_{i}:B\op P \rightarrow Q$, with $1\leq i \leq n$, and $f_{n+1}:B\op P\rightarrow Q{\circ} X$. Since the little discs arise from an affine embedding, $c_{i}$ is determined by the images of $0$ and $1$. In a similar way, we define the linear embeddings $h_{i}:[0\,,\,1]\rightarrow [0\,,\,1]$, with $0\leq i \leq n$, representing the gaps between the cubes:\vspace{3pt}
$$
h_{i}(0)=
\left\{
\begin{array}{cc}\vspace{2pt}
0 & \text{if } i=0, \\ 
c_{i}(1) & \text{if } i\neq 0,
\end{array} 
\right.
\hspace{15pt}\text{and}\hspace{15pt}
h_{i}(1)=
\left\{
\begin{array}{cc}\vspace{2pt}
1 & \text{if } i=n, \\ 
c_{i+1}(0) & \text{if } i\neq n.
\end{array} 
\right.\vspace{3pt}
$$

The bimodule map $\alpha_{n,o}(c\,;\,f_{1},\cdots,f_{n+1})$ is defined by using a decomposition of the points $y=[T\,;\, \{t_{v}\}\,;\,\{x_{v}\}]\in B\op P$ according to the parameters indexing the vertices. Roughly speaking, the  segments $<c_{1},\ldots,c_{n+1}>$ subdivide the tree $T$ into sub-trees as shown in Figure \ref{G0}. Then, we apply the bimodule map $f_{i}$ to the sub-trees associated to the  segment $c_{i}$ and the composite map $\eta{\circ}\mu:B\op P\rightarrow P\rightarrow Q$ to the sub-trees associated to gaps. Finally, we put together the pieces by using the operadic structure of $Q$ and the left $Q$-module
structure  of $Q{\circ} X$. By construction, we can assume that the representative point $y$ does not have two consecutive vertices (i.e. connected by an inner edge) indexed by the same real number. For the moment, we also assume that the
planar  tree $T$ is planar is labelled by the identity permutation.

More precisely, a sub-point of $y=[T\,;\,\{t_{v}\}\,;\, \{x_{v}\}]$ is an element in $B\op P$ obtained from $y$ by taking a sub-tree of $T$ preserving the indexation. A sub-point $w$ is said to be associated to the gap $h_{i}$ if the vertices below $w$ (seen as a sub-point of $y$) are strictly below the line $h_{i}(0)$ whereas the vertices above $w$ are strictly above $h_{i}(1)$. Furthermore, the parameters indexing the vertices of the main tree of $w$ are in the interval $[h_{i}(0)\,,\,h_{i}(1)]$. The set $\mathcal{T}[h_{i}\,;\,y]=\{w_{1}^{i},\ldots,w_{p_{i}}^{i}\}$ of sub-points associated to the gap $h_{i}$ is ordered using the planar structure of the tree $T$. For instance, the sets $\mathcal{T}[h_{0}\,;\,y]$ and $\mathcal{T}[h_{1}\,;\,y]$ associated to the point in Figure \ref{G0} are the following ones:
\begin{figure}[!h]
\begin{center}
\includegraphics[scale=0.2]{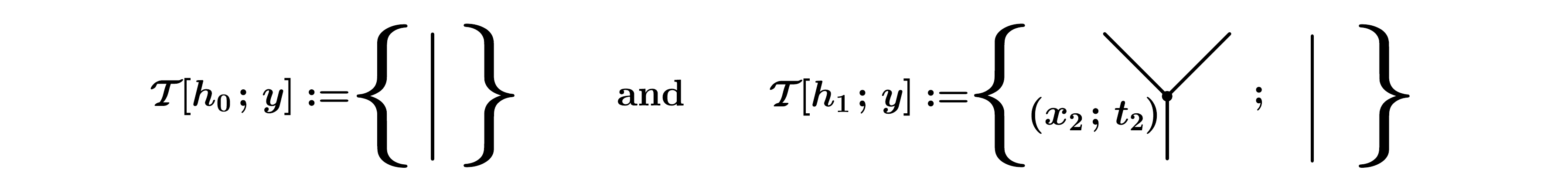}
\end{center}
\end{figure}

\noindent where the trivial tree without vertex represents the class of the $1$-corolla indexed by $(\iota(\ast_{1})\,;\,t)$ with $t\in [0\,,\,1]$ and $\ast_{1}$ the unit of the operad $\op P$.

Similarly, a sub-point $z$ is said to be associated to the segment $c_{i}$ if the vertices below $z$ (seen as a sub-point in $y$) are on the line $c_{i}(0)$ or below it, whereas the vertices above $z$ are on the line $c_{i}(1)$ or above it. Furthermore, the parameters indexing the vertices of the main tree of $z$ are in the interval $]c_{i}(0)\,,\,c_{i}(1)[$, if $i\leq n$, or in the interval $]c_{i}(0)\,,\,1]$, if $i=n+1$. The set $\mathcal{T}[c_{i}\,;\,y]=\{z_{1}^{i},\ldots,z_{q_{i}}^{i}\}$ of sub-points associated the little disc $c_{i}$ is ordered using the planar structure of the tree $T$. For instance, the sets $\mathcal{T}[c_{1}\,;\,y]$ and $\mathcal{T}[c_{2}\,;\,y]$ associated with the point $y$ in Figure \ref{G0} are the following ones:
\begin{figure}[!h]
\begin{center}
\includegraphics[scale=0.2]{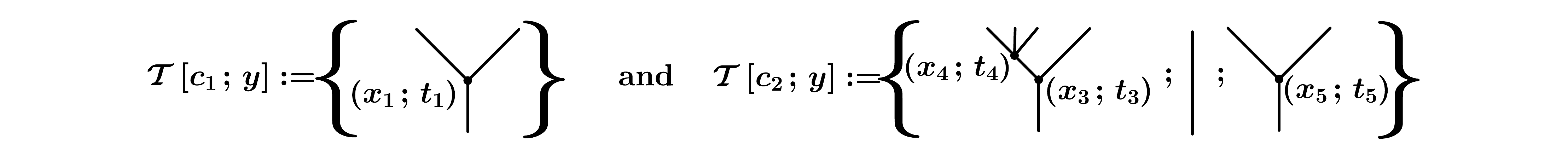}
\end{center}
\end{figure}

Let us remark that we  need the trivial trees in the above definition since the bimodule maps $\{f_{i}\}$ do not necessarily map the trivial tree to the unit of the operad $\op Q$. Furthermore, we need a map rescaling the parameters of the sub-points:
\begin{equation}\label{G9}
c_{i}^{\ast}:\mathcal{T}[c_{i}\,;\,y]\longrightarrow B\op P\,\,;\,\, [T'\,;\,\{t'_{v}\}\,;\, \{x'_{v}\}]\longmapsto [T'\,;\,\{c_{i}^{-1}(t'_{v})\}\,;\, \{x'_{v}\}].\vspace{-5pt}
\end{equation}
The map is well defined since the parameters indexing the vertices of the elements in $\mathcal{T}[c_{i}\,;\,y]$ are in the interval $]c_{i}(0)\,,\,c_{i}(1)[$ or $]c_{n+1}(0)\,,\,1]$. From the operadic structure of $\op Q$ and the left $W\op Q$-module structure on $\op Q{\circ} X$, we build the map $\alpha_{n,o}(c\,;\,f_{1},\cdots,f_{n+1})$ by induction as follows: \vspace{3pt}
$$
\begin{array}{lcl}\vspace{4pt}
\alpha_{n,o}(c\,;\,f_{1},\cdots,f_{n})_{0}(y) & = & \eta{\circ}\mu(w_{1}^{0}), \\ 
\alpha_{n,o}(c\,;\,f_{1},\cdots,f_{n})_{1}(y) & = & \alpha_{n,o}(c\,;\,f_{1},\cdots,f_{n})_{0}(y)\big(\,f_{1}(c_{1}^{\ast}(z_{1}^{1})),\ldots,f_{1}(c_{1}^{\ast}(z_{q_{1}}^{1}))\,\big),  \\ 
 & \vdots & \\ \vspace{4pt}
\alpha_{n,o}(c\,;\,f_{1},\cdots,f_{n})_{2k}(y) & = & \alpha_{n,o}(c\,;\,f_{1},\cdots,f_{n})_{2k-1}(y)\big(\,\eta{\circ}\mu(w_{1}^{k}) ,\ldots,\eta{\circ}\mu(w_{p_{k}}^{k}) \,\big), \\ 
\alpha_{n,o}(c\,;\,f_{1},\cdots,f_{n})_{2k+1}(y) & = & \alpha_{n,o}(c\,;\,f_{1},\cdots,f_{n})_{2k}(y)\big(\,f_{k}(c_{k}^{\ast}(z_{1}^{k})),\ldots,f_{k}(c_{k}^{\ast}(z_{q_{k}}^{k}))\,\big), \\ 
 & \vdots &  \\ 
\alpha_{n,o}(c\,;\,f_{1},\cdots,f_{n})(y) & = & \alpha_{n,o}(c\,;\,f_{1},\cdots,f_{n})_{2n}(y)\big(\,f_{n+1}(c_{n+1}^{\ast}(z_{1}^{n+1})),\ldots,f_{n+1}(c_{n+1}^{\ast}(z_{q_{n+1}}^{n+1}))\,\big).
\end{array} 
$$
We do not need to rescale  the sub-points associated to gaps since the map $\mu:B\op P\rightarrow \op P$ sends all the parameters indexing the vertices to $0$. 

This construction produces also a $\mathcal{C}_{1}$-algebra structure on $\Bimod_{W\op P}(B\op P, \op Q)$.
Note also that the $\mathcal{SC}_1$-action restricts on the pair 
$$
\left(  \Bimod_{W\op P}(B\op P_k, \op Q) \,;\,  \Bimod_{W\op P}(B\op P_k, \op Q{\circ} X)\right),
$$
 because the sub-points of an element in $B\op P_{k}$ are still elements in $B\op P_{k}$ and the rescaling maps (\ref{G9}) decrease the number of geometrical inputs. 
%

One has the following statement:

\begin{thm}
The morphisms induced by (\ref{Mapcase1}) and (\ref{Mapcase2})
$$
\left\{\begin{array}{c}\vspace{4pt}
\Omega \Operad(W\op P , \op Q) \\ 
hofiber(X\rightarrow \Operad(W\op P , \op Q))
\end{array}\right\}\longrightarrow
\left\{\begin{array}{c}\vspace{4pt}
 \Bimod_{W\op P}(B\op P , \op Q) \\ 
 \Bimod_{W\op P}(B\op P , \op Q{\circ} X)
\end{array}\right\}, 
$$
$$
\left\{\begin{array}{c}\vspace{4pt}
\Omega \Operad(W\op P_k , \op Q) \\ 
hofiber(X\rightarrow \Operad(W\op P_k , \op Q))
\end{array}\right\}\longrightarrow
\left\{\begin{array}{c}\vspace{4pt}
 \Bimod_{W\op P}(B\op P_k , \op Q) \\ 
 \Bimod_{W\op P}(B\op P_k , \op Q{\circ} X)
\end{array}\right\} 
$$
are morphisms of $\mathcal{SC}_{1}$-algebras. Furthermore, if $X$ is path-connected and the spaces $P(1)$ and $Q(1)$ are weakly contractible, then these are weak equivalences of $\mathcal{SC}_{1}$-algebras (for $k\geq 1$).
\end{thm}

\section{The smoothing theory delooping of $\Emb^\fr_\p(D^m,D^n)$}
\label{s:last}

The space $\Emb_\p(D^m,D^n)$, $n-m\geq 3$, $n\geq 5$, is known to have a delooping by means
of the smoothing theory~\cite[Proposition~1.3]{Sakai}:
\[
\Emb_\p(D^m,D^n)\simeq\Omega^m\hofiber(\St_{m,n}\to \St^t_{m,n}),
\]
where 
\[
\St_{m,n}^t=\TOP(n)/\TOP(n,m)
\] 
denotes the topological Stiefel manifold; $\TOP(n)$ is the group of homeomorphisms of $\R^n$;
$\TOP(n,m)$ is its subgroup of homeomorphisms preserving poinwise $\R^m\subset \R^n$.

\begin{prop}\label{p:deloop_sakai}
For $n-m\geq 3$, $n\geq 5$, one has
\beq{eq:deloop_sakai}
\Emb^\fr_\p(D^m,D^n)\simeq\Omega^{m+1}\bigl( \St^t_{m,n}\sslash\SO(n)\bigr).
\eeq
\end{prop}

\begin{proof}
One has a commutative diagram
\[
\begin{tikzcd}
\Emb^\fr_\p(D^m,D^n)\ar{r}\ar{d} &  \Omega^m \SO(n)\ar{r}\ar{d}& \Omega^m\St^t_{m,n}\ar{d}{=} \\
  \Emb_\p(D^m,D^n)\ar{r} & \Omega^m\St_{m,n} \ar{r}& \Omega^m\St^t_{m,n}.
\end{tikzcd}
\]
The lower line is a fiber sequence by \cite[Proposition~1.3]{Sakai}. The right vertical line
being identity, the middle one being a fibration, and the left square being a pullback one, all together imply that
the upper line is  also a fiber sequence. One gets
\[
\Emb^\fr_\p(D^m,D^n) \simeq\Omega^m\hofiber\bigl(\SO(n)\to \St^t_{m,n}\bigr)
\simeq \Omega^{m+1}\bigl( \St^t_{m,n}\sslash\SO(n)\bigr).
\]
\end{proof}

\begin{rem}\label{r:sakai1}
\sloppy
Note that $\St^t_{m,n}$ has a left action by $\SO(n)$. Thus by $\St^t_{m,n}\sslash\SO(n)$ we understand the
space $$\SO(n)\bbslash\TOP(n)/\TOP(n,m).$$
\end{rem}

\begin{rem}\label{r:sakai2}
Note that the same argument can be used to show that our delooping~\eqref{eq:cor2} easily follows from the 
Boavida-Weiss result~\eqref{eq:BW1}.
\end{rem}

\renewcommand{\thethm}{\arabic{thm}}
\begin{bibdiv}
\begin{biblist}

%

%

\bib{BatDL}{article}{
author={Batanin, Michael},
author={De Leger, Florian},
title={Grothendieck's homotopy theory, polynomial monads and delooping of spaces of long knots},
journal={J. Noncommut. Geom.},
year={2019},
volume={13},
number={4},
pages={411--453},
}

\bib{BM}{article}{
    AUTHOR = {Berger, Clemens},
    author={Moerdijk, Ieke},
     TITLE = {The {B}oardman-{V}ogt resolution of operads in monoidal model
              categories},
   JOURNAL = {Topology},
    VOLUME = {45},
      YEAR = {2006},
    NUMBER = {5},
     PAGES = {807--849},
      ISSN = {0040-9383},
}
%
%

\bib{WBdB2}{article}{
AUTHOR = {Boavida de Brito, Pedro},
    author={Weiss, Michael},
    title={Spaces of smooth embeddings and configuration categories},
    journal={J. Topol.},
    year={2018},
    volume={11},
    number={1},
    pages={65--143},
}
%
%

\bib{Budney1}{article}{
author={Budney, Ryan},
title={Little cubes and long knots},
year={2007},
journal={Topology},
volume={46},
number={1},
pages={1--27},
}

\bib{Budney2}{article}{
author={Budney, Ryan},
title={An operad for splicing},
journal={J. Topol.},
year={2012},
volume={5},
number={4},
pages={945--976},
}

%

%

\bib{BuL}{article}{
author={Burghelea, Dan},
author={Lashof, Richard},
title={The homotopy type of the space of diffeomorphisms. I},
year={1974},
journal={Trans. Amer. Math. Soc.},
volume={196},
pages={1--36},
}

\bib{Duc}{article}{
author={J. Ducoulombier},
year={2019},
journal={Journal of Homotopy and Related Structures},
title={Delooping derived mapping spaces of bimodules over an operad},
volume={14},
pages={411--453},
}

\bib{Duc2}{article}{
author={J. Ducoulombier},
year={2018},
note={arXiv:1809.00682},
title={Delooping of high-dimensional spaces of string links},
}

\bib{DucT}{article}{
author={J. Ducoulombier and V. Turchin},
year={2017},
note={arXiv:1708.02203},
title={Delooping the functor calculus tower},
}

\bib{DucTF}{article}{
author={J. Ducoulombier{,} B. Fresse and V. Turchin},
year={2019},
title={Projective and Reedy model category structures for (infinitesimal) bimodules over an operad},
note={arXiv:1911.03890},
}

\bib{DwyerHess0}{article}{
author= {W.  Dwyer and K.  Hess},
year={2012},
title={Long knots and maps between operads},
journal={Geom. Topol.},
volume= {16},
number={2},
pages={919--955}
}
%

\bib{Fr}{book}{
author={Fresse, Benoit},
title={Homotopy of Operads and Grothendieck-Teichm\"uller Groups. Part 2. The applications of (rational) homotopy theory methods},
series= {Mathematical Surveys and Monographs},
volume={217},
publisher= {American Mathematical Society},
year={2017},
pages={xxxv+704},
address={Providence, RI},
}

\bib{MoriyaSakai}{article}{
author={Moriya, Syunji},
author={Sakai, Keiichi},
title={The space of short ropes and the classifying space of the space of long knots},
journal={Algebr. Geom. Topol.},
volume={18},
year={2018},
number={5},
pages={2859--2873},
}

\bib{Mostovoy}{article}{
author={Mostovoy, Jacob},
title={Short ropes and long knots},
journal={Topology},
volume={41},
year={2002},
number={3},
pages={435--450},
}

\bib{Sakai}{article}{
author={Sakai, Keiichi},
title={Deloopings of the spaces of long embeddings},
journal={Fund. Math.},
volume={227},
year={2014},
number={1},
pages={27--34},
}


\bib{Salvatore1}{article}{
author={Salvatore, Paolo},
title={Knots, operads, and double loop spaces},
journal={Int. Math. Res. Not.},
volume={2006},
year={2006},
pages={22pp},
note={Art. ID 13628},
}

%
%
%
%
%
%
%
%
%

\bib{Turchin5}{article}{
author={Turchin, Victor},
title={Delooping totalization of a multiplicative operad},
journal={J. Homotopy Relat. Struct.},
year={2014},
volume={9},
number={2},
pages={349--418}
}
%
%
%
%
%

\bib{Voronov}{article}{
author={Voronov, Alexander A.},
title={The Swiss-cheese operad},
conference={
  title={Homotopy invariant algebraic structures},
  address={Baltimore, MD},
  date={1998},
  },
 book={
   series={Contemp. Math.},
   volume={239},
   publisher={Amer. Math. Soc.},
   place={Providence, RI},
},
pages={365--373},
date={1999},
}

\end{biblist}
\end{bibdiv}

\end{document}